\newcommand{\aut}{\mathrm{Aut}}
\renewcommand{\mod}{{\;\rm mod}}
\newcommand{\Ima}{{\rm Im}}
\newcommand{\cO}{ {\mathcal{O}}}
\newcommand{\cU}{{\mathcal{U}}}
\newcommand{\PP}{{\mathbb P}}
\newcommand{\ra}{\rightarrow}
\newcommand{\be}{\begin{equation}}
\newcommand{\ee}{\end{equation}}
\newcommand{\hone}{H^1(X,\cO_X)}
\newcommand{\hzero}{H^0(X, \Omega_X)}
\newcommand{\derhamhone}{H^1_\mathrm{dR}(X/k)}
\newtheorem{thm}{Theorem}[section]
\newtheorem*{thm*}{Theorem}
\newtheorem{prop}[thm]{Proposition}
\newtheorem{lem}[thm]{Lemma}
\newtheorem{rmk}[thm]{Remark}
\newtheorem{example}[thm]{Example}
\begin{document}

\begin{center}
{\bf \Large On the de-Rham Cohomology of}\\
\vspace*{0.3em} {\bf \Large Hyperelliptic Curves}\\
\vspace*{1em}
{\sc Bernhard K\"ock} and {\sc Joseph Tait}\\
\vspace*{2em}
\begin{quote}
{\bf Abstract.} {\footnotesize For any hyperelliptic curve $X$, we give an explicit basis of the first de-Rham cohomology of $X$ in terms of \v{C}ech cohomology. We use this to produce a family of curves in characteristic~$p>2$ for which the Hodge-de-Rham short exact sequence does not split equivariantly; this generalises a result of Hortsch. Further, we use our basis to show that the hyperelliptic involution acts on the first de-Rham cohomology by multiplication by $-1$, i.e., acts as the identity when $p=2$.

{\bf MSC-class:} 14F40, 14G17, 14H37.

{\bf Keywords}: hyperelliptic curve; de-Rham cohomology; \v{C}ech cohmology; Hodge-de-Rham short exact sequence; hyperelliptic involution.
}\end{quote}
\end{center}

\section*{Introduction}

Recall that the de-Rham cohomology $H^*_\mathrm{dR}(X/k)$ of a smooth projective curve $X$ over an algebraically closed field $k$ is defined as the hypercohomology of the de-Rham complex
\[\cO_X \xrightarrow{d} \Omega_X\]
where $d$ denotes the usual differential map $f \mapsto df$. In particular, we have a long exact sequence relating $H^*_\mathrm{dR}(X/k)$ to ordinary cohomology of the structure sheaf $\cO_X$ and of the sheaf $\Omega_X$ of differentials on~$X$. The very general and famous fact that the Hodge-de-Rham spectral sequence degenerates at $E_1$ (e.g., see \cite{wedhorn}) means for our curve $X$ that the following main part of that long sequence is a short exact sequence, see Propositon~\ref{propshortexactsequence}:
\begin{equation*}
0 \rightarrow H^0(X, \Omega_X) \rightarrow H^1_\mathrm{dR}(X/k) \rightarrow H^1(X, \cO_X) \rightarrow 0.
\end{equation*}
We call this sequence the Hodge-de-Rham short exact sequence. In particular, the vector space $H^1_\mathrm{dR}(X/k)$ is the direct sum of the vector spaces $H^0(X, \Omega_X)$ and $H^1(X, \cO_X)$ over $k$.

We now assume furthermore that a finite group $G$ acts on our curve $X$. If $p:= \mathrm{char}(k)$ does not divide the order of $G$, Maschke's Theorem implies that the Hodge-de-Rham short exact sequence also splits as a sequence of modules over the group ring $k[G]$.

However, the latter fact fails to be true in general when $p>0$ does divide $\mathrm{ord}(G)$. A counterexample has been constructed in the recent paper \cite{canonicalrepresentation} by Hortsch.
The main goal of this paper is to generalise that counterexample. More precisely, we will prove the following theorem, see Theorem~\ref{theoremsplittingtheorem} and Example~\ref{examplesofsutomorpohism}.\\

{\bf Theorem}. {\em Let $p \ge 3$ and  let $q(z) \in k[z]$ be a monic polynomial of odd degree without repeated roots. Let $X$ denote the hyperelliptic curve over $k$ defined by the equation $y^2=q(x^p-x)$ and let $G$ denote the subgroup of $\mathrm{Aut}(X)$ generated by the automorphism $\tau$ given by $(x,y) \mapsto (x+1, y)$. Then the Hodge-de-Rham short exact sequence does not split as a sequence of $k[G]$-modules.} \\

We remark that the hyperelliptic curves considered in this theorem are exactly those hyperelliptic curves $y^2=f(x)$ which allow an automorphism that maps $x$ to $x+1$ and for which $f(x)$ is of odd degree, see Example~\ref{examplesofsutomorpohism} and Proposition~\ref{lemmatauactsbyplusminusoneony}.

When $q(z)= z$, the theorem above becomes the main theorem of \cite{canonicalrepresentation}. Beyond \cite{canonicalrepresentation}, our theorem shows (see Remark~\ref{RemarksMainTheorem}) that, for every algebraically closed field $k$ of characteristic $p\ge 3$, there exist infinitely many $g \ge 2$ and hyperelliptic curves $X$ over $k$ of genus $g$ for which the Hodge-de-Rham short exact sequence does not split equivariantly. It also shows that, for every $g \ge 2$, there exists a prime $p\ge 3$ and  hyperelliptic curves in characteristic $p$ of genus $g$ for which the Hodge-de-Rham short exact sequence does not split equivariantly.

In Example~\ref{ExampleBranchPoint} and Remark~\ref{RemarkBranchPoint} we show, using the modular curve $X_0(22)$ for $p=3$, that, without assuming the degree of $q(x)$ to be odd, this theorem may be false.

To prove our main theorem, we follow the same broad strategy as in \cite{canonicalrepresentation}: we give an explicit basis of $H^1_\mathrm{dR}(X/k)$ in terms of \v{C}ech cohomology (in fact for an arbitrary hyperelliptic curve $X$), see Theorem~\ref{theorembasisofderham}, and study the action of $\tau$ on that basis. The actual computations towards the end however do not generalise those in \cite{canonicalrepresentation}, see Remark~\ref{mistake} and Remark~\ref{mistake2}.

We provide a basis of $H^1_\mathrm{dR}(X/k)$ for any hyperelliptic curve $X$ also when $p=2$ and use this to show that the hyperelliptic involution acts trivially on $H^1_\mathrm{dR}(X/k)$ when $p=2$. In fact, the hyperelliptic involution acts on $H^1_\mathrm{dR}(X/k)$ by multiplication by $-1$ for all $p$, see Theorem~\ref{thmactionhyperelliptic}.

If $p=2$, Elkin and Pries construct a subtler basis of $H^1_\mathrm{dR}(X/k)$ in \cite{pries} which is suitable to study the action of Frobenius and Verschiebung and, finally, to determine the Ekedahl-Oort type.

{\bf Acknowledgements.} The authors would like to thank the referees for carefully reading the paper and for their elaborate and helpful comments.

\section{Preliminaries}

In this section, we introduce assumptions and notations used throughout this paper and collect and prove some auxiliary results.

We assume that $k$ is an algebraically closed field of characteristic $p \ge 0$ and that $X$ is a hyperelliptic curve over $k$ of genus $g \geq 2$.
We recall that a curve (always assumed to be smooth, projective and irreducible in this paper) is hyperelliptic if there exists a finite, separable morphism of degree two from the curve to $\mathbb P_k^1$. \index{Hyperelliptic curve}
We fix such a map
\[\pi \colon X \rightarrow \mathbb P_k^1,\]
which is unique up to automorphisms of $X$ and of $\mathbb P_k^1$ (see \cite[Remark~7.4.30]{liu}). Let $K(X)/K(\PP^1_k)= k(x)$ denote the extension of function fields corresponding to~$\pi$.
According to \cite[Proposition~7.4.24 and Remark~7.4.25]{liu},
we may and will furthermore assume the following concrete description of $K(X)$.

If $p \not=2$, then $K(X)=k(x,y)$ where $y$ satisfies
\begin{equation}\label{Weierstrassequationpnot2}
y^2 = f(x)
\end{equation}
for some monic polynomial $f(x) \in k[x]$ which has no repeated roots; moreover, $f(x)$ is of degree $2g +1$ if $\infty \in \PP^1_k$ is a branch point of $\pi$ and of degree $2g+2$ otherwise. The branch points of~$\pi$ are then the roots of $f(x)$, together with $\infty \in \PP^1_k$ if $\mathrm{deg}(f(x)) = 2g+1$.

If $p=2$, then $K(X) = k(x,y)$ where $y$ satisfies
\begin{equation}\label{WeierstrassEquationpis2}
y^2 - h(x)y = f(x)
\end{equation}
for some polynomials $h(x), f(x) \in k[x]$ such that $h'(x)^2 f(x) + f'(x)^2$ and $h(x)$ have no common roots in $k$; moreover, we have $d:=\mathrm{deg}(h(x)) \le g+1$, with equality if and only if $\infty$ is not a branch point of $\pi$.
The branch points of~$\pi$ are the roots of $h(x)$,
together with $\infty\in \PP^1_k$ if $d<g+1$.

The following estimate for the order of $y$ above $\infty$ is true for both $p \not=2$ and $p=2$.

\begin{lem}\label{order of y at infty}
Let $P \in \pi^{-1}(\infty)$. Then we have:
\[\mathrm{ord}_P(y) \ge \left\{\begin{aligned}
&-(g+1)&& \quad \textrm{ if $\pi$ is unramified at $P$}\\
&-2(g+1) && \quad \textrm{ if $\pi$ is ramified at $P$}.
\end{aligned}\right.\]
\end{lem}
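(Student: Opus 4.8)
The plan is to reduce both inequalities to the single assertion that the function $v := y/x^{g+1}$ is regular at $P$, and then to prove that regularity by an integral-closure argument that works uniformly in the two characteristics. First I would record the order of $x$ at $P$: since $x$ has a simple pole at $\infty \in \PP^1_k$, we have $\mathrm{ord}_P(x) = e_P \cdot \mathrm{ord}_\infty(x) = -e_P$, where $e_P \in \{1,2\}$ is the ramification index of $\pi$ at $P$; thus $\mathrm{ord}_P(x) = -1$ when $\pi$ is unramified at $P$ and $\mathrm{ord}_P(x) = -2$ when $\pi$ is ramified at $P$. Consequently both asserted bounds are the single statement $\mathrm{ord}_P(y) \ge (g+1)\,\mathrm{ord}_P(x)$, i.e. $\mathrm{ord}_P(v) \ge 0$.

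To prove $\mathrm{ord}_P(v) \ge 0$, I would pass to the chart at infinity by setting $u := 1/x$, so that $u$ is a local parameter of $\PP^1_k$ at $\infty$ and $\mathrm{ord}_P(u) = e_P > 0$; in particular $k[u] \subseteq \cO_{X,P}$. Rewriting the Weierstrass equation in the variables $u$ and $v$ and clearing denominators by multiplying through by $u^{2g+2}$, equation \eqref{Weierstrassequationpnot2} becomes $v^2 = \tilde f(u)$ and equation \eqref{WeierstrassEquationpis2} becomes $v^2 - \tilde h(u)\,v = \tilde f(u)$, where $\tilde h(u) := u^{g+1}h(1/u)$ and $\tilde f(u) := u^{2g+2}f(1/u)$. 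The crucial point is that $\tilde h$ and $\tilde f$ are genuine polynomials in $u$: this is exactly the content of the degree bounds $\deg h \le g+1$ and $\deg f \le 2g+2$ built into the chosen model. In either characteristic, $v$ then satisfies a monic quadratic relation with coefficients in $k[u]$, so $v$ is integral over $k[u]$.

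The argument concludes formally: since $X$ is smooth, $\cO_{X,P}$ is a discrete valuation ring, hence integrally closed in $K(X)$. As $k[u] \subseteq \cO_{X,P}$ and $v$ is integral over $k[u]$, it is integral over $\cO_{X,P}$ and therefore lies in $\cO_{X,P}$; that is, $\mathrm{ord}_P(v) \ge 0$. Substituting $\mathrm{ord}_P(v) = \mathrm{ord}_P(y) - (g+1)\,\mathrm{ord}_P(x)$ and $\mathrm{ord}_P(x) = -e_P$ gives $\mathrm{ord}_P(y) \ge -(g+1)e_P$, which is precisely the claimed estimate in both the unramified and the ramified case.

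I expect the only real obstacle to be the verification that $\tilde f$ is a polynomial, i.e. the bound $\deg f \le 2g+2$. For $p \neq 2$ this is immediate, since $\deg f$ is $2g+1$ or $2g+2$ as recorded above; for $p=2$ it must be taken from the normalisation of the model (or, if an intrinsic justification is wanted, obtained from a Riemann--Hurwitz computation of the different of $\pi$, noting that any excess degree of $f$ can be removed by the substitution $y \mapsto y + r(x)$, which alters $f$ by $r^2 + h\,r$). Everything else is a bookkeeping of orders. As a sanity check one can avoid the chart entirely and argue purely with valuations: comparing $\mathrm{ord}_P$ of the two sides of $y^2 = h\,y + f$ (respectively $y^2 = f$) and using $\deg h \le g+1$, $\deg f \le 2g+2$ together with $\mathrm{ord}_P(x) < 0$ again forces $\mathrm{ord}_P(y) \ge (g+1)\,\mathrm{ord}_P(x)$, yielding the same two bounds.
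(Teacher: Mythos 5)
Your argument is correct, but it is necessarily a different route from the paper's, because the paper gives no proof at all here: it simply cites \cite[Inequality~(5.2)]{faithful}. Your reduction of both bounds to the single statement that $v=y/x^{g+1}$ lies in $\cO_{X,P}$, proved by exhibiting a monic quadratic relation for $v$ over $k[u]\subseteq\cO_{X,P}$ (with $u=1/x$) and invoking that the discrete valuation ring $\cO_{X,P}$ is integrally closed, is clean, works uniformly for $p=2$ and $p\neq 2$, and uses only the degree bounds of the normalised model; the valuation-theoretic ``sanity check'' you sketch at the end is essentially the kind of direct computation one finds in the cited reference. The one ingredient you rightly flag, namely $\deg f\le 2g+2$ in the case $p=2$ (the paper only states $\deg h\le g+1$ explicitly), is not a real gap: the paper adopts the model of \cite[Proposition~7.4.24]{liu}, and that normalisation includes the bound $\deg f\le 2g+2$, so your integrality relation $v^2-\tilde h(u)v=\tilde f(u)$ with $\tilde h,\tilde f\in k[u]$ is legitimate under the paper's standing assumptions. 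In short: a correct, self-contained proof where the paper offers only a pointer to the literature.
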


\begin{proof}
  This is \cite[Inequality~(5.2)]{faithful}.
\end{proof}

\begin{lem}\label{bais of global holomorphic differentials}
If $p \not=2$, let $\omega := \frac{dx}{y}$ and, if $p=2$, let $\omega:= \frac{dx}{h(x)}$. Then the differentials $\omega, x\omega, \ldots, x^{g-1}\omega$ form a basis of the $k$-vector space $H^0(X, \Omega_X)$ of global holomorphic differentials on $X$.
\end{lem}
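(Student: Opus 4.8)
The plan is to prove that $\omega, x\omega, \ldots, x^{g-1}\omega$ form a basis of $\hzero$ by showing these $g$ differentials are linearly independent and holomorphic, and then invoking the fact that $\dim_k \hzero = g$ (the genus), so that $g$ linearly independent holomorphic differentials necessarily span. Linear independence is immediate: a relation $\sum_{i=0}^{g-1} c_i x^i \omega = 0$ forces the polynomial $\sum c_i x^i$ to vanish as a function, hence all $c_i = 0$. Thus the entire content of the lemma lies in verifying that each $x^i \omega$ is a global holomorphic differential, i.e.\ has nonnegative order at every point of $X$.

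\medskip

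First I would treat the points lying over the finite points of $\PP^1_k$, i.e.\ points $P$ with $x(P) \neq \infty$. Away from the branch locus, $x$ is a local parameter and $dx$ has a zero or is regular while the denominator ($y$ when $p \neq 2$, or $h(x)$ when $p=2$) is a nonvanishing function, so $x^i\omega$ is clearly regular there. At a finite branch point $P$ (a root $\alpha$ of $f$ when $p\neq 2$, or a root of $h$ when $p=2$), the map $\pi$ is ramified, so $x - \alpha$ has a double zero and one must check that the zero of $dx$ (or the behaviour of $1/h$) compensates the pole introduced by the denominator. In the odd case this is the classical computation: $y$ vanishes to order one at $P$, $dx$ vanishes to order one, so $dx/y$ is regular and nonzero at $P$, and multiplying by $x^i$ keeps it regular; the characteristic-two case is the analogous computation using the differential of the Weierstrass equation~\eqref{WeierstrassEquationpis2} to control $\mathrm{ord}_P(dx)$ against $\mathrm{ord}_P(h(x))$.

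\medskip

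The main obstacle, and the step I would treat most carefully, is the behaviour at the points $P \in \pi^{-1}(\infty)$ over infinity, where the naive expressions in $x$ and $y$ all blow up and one needs the growth estimate of Lemma~\ref{order of y at infty}. Here I would split into the two cases governed by whether $\pi$ is ramified over $\infty$. When $\infty$ is a branch point there is a single such $P$ with ramification index two, so $\mathrm{ord}_P(x) = -2$ and hence $\mathrm{ord}_P(x^i) = -2i$; combining Lemma~\ref{order of y at infty} for the denominator with the order of $dx$ at $P$, I would verify that $\mathrm{ord}_P(x^i\omega) \ge 0$ for all $0 \le i \le g-1$, the constraint $i \le g-1$ being exactly what makes the top differential $x^{g-1}\omega$ just barely regular. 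When $\infty$ is not a branch point there are two unramified points above it, each with $\mathrm{ord}_P(x) = -1$, and I would run the same order count using the unramified branch of Lemma~\ref{order of y at infty}. In both subcases the key input is that $\mathrm{ord}_P(dx) = \mathrm{ord}_P(x) - 1$ away from ramification and $\mathrm{ord}_P(dx)$ is one larger at ramified points, a fact I would record explicitly before assembling the inequalities.

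\medskip

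Finally I would conclude by noting that since all $g$ of the $x^i\omega$ lie in the $g$-dimensional space $\hzero$ and are linearly independent, they form a basis; this avoids having to separately prove that there are no further holomorphic differentials.
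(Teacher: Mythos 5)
Your overall strategy (regularity of the $g$ differentials plus linear independence plus $\dim_k \hzero = g$) is sound and is a genuinely more self-contained route than the paper, which simply cites \cite[Proposition~7.4.26]{liu}. For $p\neq 2$ your order counts are correct and the argument goes through. However, two of the inputs you name are wrong as stated, and one of them breaks the characteristic-two case.

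First, Lemma~\ref{order of y at infty} points the wrong way for your purposes: it gives a \emph{lower} bound on $\mathrm{ord}_P(y)$, i.e.\ an upper bound on the pole order of $y$, which is what one needs when $y$ sits in a \emph{numerator} (as in the proof of Theorem~\ref{theorembasisofderham}). Here $y$ is in the denominator, so you need $\mathrm{ord}_P(y)$ bounded \emph{above}; what you should use instead is the exact value $\mathrm{ord}_P(y)=\tfrac12\,\mathrm{ord}_P(f(x))$ read off from $y^2=f(x)$ (and for $p=2$ the denominator is the polynomial $h(x)$, whose order over $\infty$ is $-e_P d$, so Lemma~\ref{order of y at infty} is not needed at all). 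Second, and more seriously, your stated ``key input'' that $\mathrm{ord}_P(dx)$ is one larger than $e_P\cdot\mathrm{ord}_{\pi(P)}(dx)$ at ramified points is the \emph{tame} formula and fails when $p=2$: every ramified point of a degree-two cover in characteristic two is wildly ramified, and the correct value at a ramified point over $\infty$ is $\mathrm{ord}_P(dx)=2(g-1-d)$ (Lemma~\ref{order of dx above infty}), coming from the different $\delta_P=2(g+1-d)$, not from $e_P-1=1$. With your tame value $-3$ the order count at a ramified point over $\infty$ gives $-2i-3+2d$, which is negative for small $d$, so your argument would wrongly conclude that $x^i\,dx/h(x)$ is \emph{not} regular there. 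The same issue affects the finite branch points (the roots of $h$), where $\mathrm{ord}_P(dx)=\delta_P\ge 2$ rather than $1$; the computation via the differentiated equation \eqref{WeierstrassEquationpis2} can be made to work, but it is not ``analogous'' to the odd-characteristic case and needs the wild different as input. Repairing the $p=2$ branch therefore requires importing Lemma~\ref{order of dx above infty} (or its Riemann--Hurwitz proof) in place of the tame heuristic you rely on.
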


\begin{proof}
  This is \cite[Proposition~7.4.26]{liu}.
\end{proof}

\begin{rmk}
{\em A different basis of $H^0(X,\Omega_X)$ is given in \cite[Lemma~5]{madden}. The action of the Cartier operator on $H^0(X,\Omega_X)$ is studied in \cite{subrao} and \cite{Yui}.}
\end{rmk}
\begin{lem}\label{order of dx above infty}
Let $p=2$ and let $P \in \pi^{-1}(\infty)$. Then we have:
\begin{equation}
\mathrm{ord}_{P}(dx) =
\left\{\begin{aligned}
 &-2 && \quad \textrm{ if $\pi$ is unramified at $P$}\\
&2(g-1-d)& & \quad \textrm{ if $\pi$ is ramified at $P$}.
\end{aligned} \right.
\end{equation}
\end{lem}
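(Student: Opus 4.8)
The plan is to compute $\mathrm{ord}_P(x)$ first and then read off $\mathrm{ord}_P(dx)$ from the expansion of $x$ in a local uniformiser at $P$. Since $\pi$ has degree two and $x$ has a simple pole at $\infty \in \PP^1_k$, the ramification index $e_P$ of $\pi$ at $P$ satisfies $\mathrm{ord}_P(x) = e_P\cdot\mathrm{ord}_\infty(x) = -e_P$, so $\mathrm{ord}_P(x) = -1$ when $\pi$ is unramified at $P$ and $\mathrm{ord}_P(x) = -2$ when $\pi$ is ramified at $P$. Writing $t$ for a uniformiser at $P$ and expanding $x = \sum_i a_i t^i$ as a Laurent series in the completed local ring, one has $dx = \bigl(\sum_i i\,a_i t^{i-1}\bigr)\,dt$ with $\mathrm{ord}_P(dt)=0$, which reduces the whole computation to locating the lowest surviving exponent. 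Note that $dx\neq 0$, since $\pi$ and hence $x$ is separable.

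In the unramified case $\mathrm{ord}_P(x)=-1$ is prime to $p=2$, so the lowest term $(-1)a_{-1}t^{-2}\,dt$ has nonvanishing coefficient and $\mathrm{ord}_P(dx) = \mathrm{ord}_P(x)-1 = -2$ at once. The ramified case is the real content: here $\mathrm{ord}_P(x)=-2$ is divisible by $p$, the naive leading term $(-2)a_{-2}t^{-3}$ vanishes in characteristic two, and the order of $dx$ is no longer determined by the order of $x$ alone. The observation that rescues us is that, in characteristic two, only the odd-index terms of $x$ survive in $dx$: as $i\,a_i=0$ whenever $i$ is even, we have $dx = \bigl(\sum_{i\ \mathrm{odd}} a_i t^{i-1}\bigr)\,dt$, and every exponent $i-1$ occurring here is even. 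Hence $\mathrm{ord}_P(dx)$ is even.

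It then remains to trap $\mathrm{ord}_P(dx)$ in an interval of length one, for which I would invoke the basis $\omega, x\omega, \dots, x^{g-1}\omega$ of $H^0(X,\Omega_X)$ from Lemma~\ref{bais of global holomorphic differentials}, where $\omega = dx/h(x)$. Since $\deg h = d$ and $\mathrm{ord}_P(x)=-2$, we have $\mathrm{ord}_P(h(x)) = -2d$, whence $\mathrm{ord}_P(dx) = \mathrm{ord}_P(\omega) - 2d$. Holomorphicity of $x^{g-1}\omega$ at $P$ gives $\mathrm{ord}_P(\omega)\ge 2(g-1)$, hence the lower bound $\mathrm{ord}_P(dx)\ge 2(g-1-d)$. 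For the matching upper bound I would argue that $x^{g}\omega\notin H^0(X,\Omega_X)$: it cannot lie in the span of $\omega,\dots,x^{g-1}\omega$, since dividing a relation $x^{g}\omega=\sum_{i<g}c_i x^i\omega$ by the nonzero $\omega$ would contradict the $k$-linear independence of $1,x,\dots,x^{g}$, so were it holomorphic the space of global differentials would exceed dimension $g$. As $x^{g}\omega$ is holomorphic away from $\pi^{-1}(\infty)$ and $P$ is the only point there in the ramified case, it must have a pole at $P$, giving $\mathrm{ord}_P(\omega)<2g$ and therefore $\mathrm{ord}_P(dx)\le 2(g-1-d)+1$. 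Combining the two bounds with the parity observation leaves only the value $\mathrm{ord}_P(dx)=2(g-1-d)$.

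The one genuinely delicate point — and the reason the ramified case cannot be dispatched as quickly as the unramified one — is precisely that $p \mid \mathrm{ord}_P(x)$. I expect the main work to lie in justifying the parity statement and in confirming that the holomorphic-differential basis squeezes $\mathrm{ord}_P(dx)$ into an interval short enough for parity to pick out a unique value; the remaining manipulations are routine bookkeeping with orders of vanishing.
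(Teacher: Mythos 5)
Your proof is correct, but it takes a genuinely different route from the paper. The paper gets both cases in one line from the pullback formula for canonical divisors, $\mathrm{ord}_P(dx) = e_P\cdot\mathrm{ord}_\infty(dx) + \delta_P$ with $\mathrm{ord}_\infty(dx)=-2$, quoting the different exponent $\delta_P = 2(g+1-d)$ at the ramified point above $\infty$ from \cite[Equation~(5.3)]{faithful}; the entire content of the ramified case is thus outsourced to that reference. You instead work locally: the unramified case falls out because $\mathrm{ord}_P(x)=-1$ is prime to $p$, and in the ramified case, where $p \mid \mathrm{ord}_P(x)$ blocks the naive count, you combine the characteristic-two parity observation (term-by-term differentiation of the Laurent expansion kills all even-index terms, forcing $\mathrm{ord}_P(dx)$ to be even) with a two-sided squeeze coming from Lemma~\ref{bais of global holomorphic differentials}: holomorphy of $x^{g-1}\omega$ gives $\mathrm{ord}_P(dx)\ge 2(g-1-d)$, while the fact that $x^g\omega$ cannot be globally holomorphic (it would violate $\dim H^0(X,\Omega_X)=g$) and can only acquire its pole at the unique point $P$ above $\infty$ gives $\mathrm{ord}_P(dx)\le 2(g-1-d)+1$; parity then pins down the value. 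Your argument is self-contained modulo the basis of $H^0(X,\Omega_X)$ (which the paper also imports), and in effect it re-derives the different exponent, since $\delta_P = \mathrm{ord}_P(dx)+4 = 2(g+1-d)$; the paper's argument is shorter but leans on an external wild-ramification computation. All the individual steps you flag as delicate --- nonvanishing of $dx$ via separability, uniqueness of the point above $\infty$ in the ramified case, and the linear independence of $\omega,\dots,x^{g}\omega$ over $k$ --- check out.
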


\begin{proof}
  By the Riemann-Hurwitz formula \cite[Theorem~3.4.6]{stichtenoth} we have
  \[\mathrm{ord}_P(dx) = e_P \cdot \mathrm{ord}_\infty(dx) + \delta_P\]
  where $e_P$ denotes the ramification index of $\pi$ at $P$ and $\delta_P$ denotes the order of the ramification divisor of $\pi$ at $P$. It is easy to see that $\mathrm{ord}_\infty(dx) = -2$. Therefore $\mathrm{ord}_P(dx) = -2$ if $\pi$ is unramified at $P$. On the other hand, if $\pi$ is ramified at $P$, we have $\delta_P = 2(g+1-d)$ by \cite[Equation~(5.3)]{faithful} and hence
  \[\mathrm{ord}_P(dx) = 2 \cdot (-2) + 2(g+1-d) = 2(g-1-d),\]
  as claimed.
\end{proof}

We define $U_a = X \backslash \pi^{-1}(a)$ for any $a \in \mathbb P_k^1$ and let ${\cal U}$ be the affine cover of $X$ formed by $U_0$ and $U_\infty$.
Given any sheaf $\cal F$ on $X$ we have the \v{C}ech differential $\check{d}\colon {\cal F}(U_0) \times {\cal F} (U_\infty) \rightarrow {\cal F}(U_0 \cap U_\infty)$, defined by $(f_0,f_\infty) \mapsto f_0|_{U_0 \cap U_\infty} - f_\infty|_{U_0 \cap U_\infty}$.
In general we will suppress the notation denoting the restriction map.
The first cohomology group $\frac{\cO_X(U_0 \cap U_\infty)}{\Ima(\check{d})}$ of the cochain complex
    \begin{equation*}
    0 \rightarrow \cO_X(U_0)\times \cO_X(U_\infty) \xrightarrow{\check{d}} \cO_X(U_0 \cap U_\infty) \rightarrow 0.
    \end{equation*}
is the first \v{C}ech cohomology group $\check{H}^1({\cU},\cO_X)$. By Leray's theorem \cite[Theorem~5.2.12]{liu} and Serre's affineness criterion \cite[Theorem~5.2.23]{liu} we therefore have
    \begin{equation}\label{equationcechhoneisomorphismfunctions}
    \hone \cong
    \frac{\cO_X(U_0 \cap U_\infty)}{\{f_0 - f_\infty \mid f_0 \in \cO_X(U_0), \, f_\infty \in \cO_X(U_\infty) \}}.
    \end{equation}
When describing elements of $\hone$ using this isomorphisms we will denote the residue class of $f \in \cO_X(U_0 \cap U_\infty)$   by $[f]$.

\begin{prop}\label{theorembasisofhone}
    The elements $\frac{y}{x}, \ldots, \frac{y}{x^g} \in K(X)$ are regular on $U_0 \cap U_\infty$, and their residue classes $\left [ \frac{y}{x} \right ],  \ldots, \left [ \frac{y}{x^g} \right]$ form a basis of $\hone$.
    \end{prop}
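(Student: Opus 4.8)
The plan is to work entirely inside the \v{C}ech description \eqref{equationcechhoneisomorphismfunctions} of $\hone$ and to identify the subspaces of $\cO_X(U_0 \cap U_\infty)$ coming from $\cO_X(U_0)$ and $\cO_X(U_\infty)$ explicitly. First I would dispose of regularity: since $y$ is integral over $k[x]$ it lies in $\cO_X(U_\infty)$, and since $x$ vanishes only along $\pi^{-1}(0)$ it is a unit on $U_0 \cap U_\infty$; hence each $y/x^i$ is regular on $U_0 \cap U_\infty$. Next, recall that $\dim_k \hone = g$, so it suffices to prove that the $g$ classes $[y/x], \ldots, [y/x^g]$ span $\hone$, or equivalently that they are linearly independent in the quotient.

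To compute the quotient I would first record the two ambient rings. As $U_\infty = \pi^{-1}(\AA^1)$ is a smooth affine curve, its coordinate ring $\cO_X(U_\infty)$ is the integral closure of $k[x]$ in $K(X)$, which is the free module $k[x] \oplus k[x]\,y$; localising at $x$ gives $\cO_X(U_0 \cap U_\infty) = k[x,x^{-1}] \oplus k[x,x^{-1}]\,y$. Since $U_0 = (U_0 \cap U_\infty) \cup \pi^{-1}(\infty)$, the subring $\cO_X(U_0)$ consists exactly of those $\phi \in \cO_X(U_0 \cap U_\infty)$ with $\mathrm{ord}_P(\phi) \ge 0$ for every $P \in \pi^{-1}(\infty)$. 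The heart of the argument is to render this condition in the $k[x,x^{-1}]$-coordinates, using the orders of $x$ and $y$ at the points above $\infty$ supplied by Lemma~\ref{order of y at infty}.

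For $p \ne 2$ I would exploit the hyperelliptic involution $\sigma \colon y \mapsto -y$, which decomposes every space in sight into its $(\pm 1)$-eigenspaces. On the $+1$-part one has $k[x,x^{-1}] = k[x] + k[x^{-1}]$, so the even part of $\hone$ vanishes; on the $-1$-part, the order computation at $\pi^{-1}(\infty)$ shows that the monomial $x^j y$ is regular on $U_0$ precisely when $j \le -(g+1)$, independently of whether $\infty$ is a branch point. Thus $\cO_X(U_0) + \cO_X(U_\infty)$ meets $k[x,x^{-1}]\,y$ in the span of the $x^j y$ with $j \ge 0$ or $j \le -(g+1)$, and the quotient is freely spanned by the classes of $x^{-1}y, \ldots, x^{-g}y$, that is, by $[y/x], \ldots, [y/x^g]$. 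This simultaneously yields spanning and linear independence.

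For $p = 2$ the same strategy applies, but the eigenspace decomposition is unavailable and the two points above $\infty$ may carry different orders of $y$, so the regularity analysis of a general element $a(x) + b(x)\,y$ at $\pi^{-1}(\infty)$ must be carried out by hand, controlling the possible cancellation between its two components; this is the step I expect to be the main obstacle. An alternative, uniform route would be to pair the candidate classes against the explicit basis $\omega, x\omega, \ldots, x^{g-1}\omega$ of $\hzero$ from Lemma~\ref{bais of global holomorphic differentials} under the residue form of Serre duality, $\langle [\phi], \eta\rangle = \sum_{P \in \pi^{-1}(0)} \mathrm{Res}_P(\phi\,\eta)$, and to verify that the resulting $g \times g$ matrix is triangular with nonzero diagonal entries; I would fall back on this should the direct char-$2$ bookkeeping become unwieldy.
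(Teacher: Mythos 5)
Your reduction to the \v{C}ech quotient \eqref{equationcechhoneisomorphismfunctions} and your identification of $\cO_X(U_\infty)=k[x]\oplus k[x]y$ and $\cO_X(U_0\cap U_\infty)=k[x,x^{-1}]\oplus k[x,x^{-1}]y$ coincide with the paper's starting point. Where you diverge is in describing the image of $\cO_X(U_0)$: you characterise it by valuations at the points above $\infty$ and, for $p\neq 2$, split everything into $\sigma$-eigenspaces. That route does work for $p\neq 2$: the even part of the quotient dies because $k[x,x^{-1}]=k[x]+k[x^{-1}]$, and on the odd part your criterion ``$x^jy$ regular on $U_0$ iff $j\le -(g+1)$'' is correct in both the ramified and unramified cases --- though note that Lemma~\ref{order of y at infty} only gives a lower bound on $\mathrm{ord}_P(y)$, and for the ``only if'' direction you need the exact order, which you must extract from $y^2=f(x)$ (easy, but say it). This is a genuinely different, more hands-on argument than the paper's, and it has the small virtue of making the role of the involution visible.

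The genuine gap is the one you flag yourself: the case $p=2$. The paper avoids all order bookkeeping by citing \cite[Proposition~7.4.24(b)]{liu} a second time: the chart at infinity is itself a Weierstrass-type affine chart with coordinates $1/x$ and $y/x^{g+1}$, so the image of $\cO_X(U_0)$ in $\cO_X(U_0\cap U_\infty)$ is exactly the span of $1, x^{-1}, x^{-2},\ldots$ and $y/x^{g+1}, y/x^{g+2},\ldots$ --- uniformly in $p$ and in whether $\infty$ is a branch point. With that single input, both spanning and linear independence drop out by inspecting monomials, with no eigenspace decomposition, no residue pairing, and no case analysis at $\infty$. Your fallback via the residue form of Serre duality can be made to work (it is essentially the paper's Remark after the proposition, citing \cite{taitthesis}), but it is not free either: in characteristic~$2$ the naive computation $\sum_P \mathrm{Res}_P(x^{-1}dx)=2$ vanishes, so you must use $\omega_j=x^{j-1}dx/h(x)$ and the relation between the two $y$-values above $0$ to see that the diagonal entries are $h(0)/h(0)=1$, and you must normalise so that $0$ is not a branch point. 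I would recommend replacing both of your $p=2$ options with the citation of Liu's description of the second chart; it closes the gap and subsumes your $p\neq 2$ argument as well.
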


\begin{proof}
By \cite[Proposition~7.4.24(b)]{liu}, we may identify $\cO_X(U_\infty)$ with the $k$-algebra $k[x,y]$ defined by the relation given in (\ref{Weierstrassequationpnot2}) or (\ref{WeierstrassEquationpis2}). Then $\cO_X(U_0 \cap U_\infty)$ is $k[x^{\pm 1},y]$. As the relations in (\ref{Weierstrassequationpnot2}) and (\ref{WeierstrassEquationpis2}) are quadratic in $y$, the elements $\ldots, \frac{1}{x^2}, \frac{1}{x}, 1, x, x^2, \ldots$ and $\ldots, \frac{y}{x^2}, \frac{y}{x}, y, xy, x^2y, \ldots$ form a $k$-basis of $k[x^{\pm 1},y]$. The elements $1, x, x^2, \ldots$ and $y, xy, x^2y, \ldots$ obviously form a basis of the image of $\cO_X(U_\infty)$ in $\cO_X(U_0\cap U_\infty)$. By \cite[Proposition~7.4.24(b)]{liu}, the image of $\cO_X(U_0)$ in $\cO_X(U_0\cap U_\infty)$ consists of elements of the form $g\left(\frac{1}{x}, \frac{y}{x^{g+1}}\right)$ where $g \in k[s,t]$. Hence, the elements $\ldots, \frac{1}{x^2}, \frac{1}{x}, 1$ and $\ldots \frac{y}{x^{g+3}}, \frac{y}{x^{g+2}}, \frac{y}{x^{g+1}}$ form a basis of that image. We conclude that the residue  classes $\left [ \frac{y}{x} \right ],  \ldots, \left [ \frac{y}{x^g} \right]$ form a basis of $\hone$, as was to be shown.
\end{proof}

\begin{rmk} {\em Let $\omega_j := \frac {x^{j-1}}{y} {dx}$  when $p \not=2$ and let $\omega_j =\frac{ x^{j-1}}{h(x)}{dx}$ when $p=2$. Then, by Lemma~\ref{bais of global holomorphic differentials}, the elements $\omega_j$, $j=1, \ldots, g$, form a $k$-basis of~$H^0(X,\Omega_X)$. Let $\langle\hspace{0.7em}, \hspace{0.7em}\rangle:H^0(X, \Omega_X) \times H^1(X, \cO_X) \rightarrow k$ denote the Serre duality pairing. Then $\langle \omega_j,\left[ \frac{y}{x^i} \right] \rangle $ vanishes if $j \not= i$ and is non-zero if $j=i$, see the proof of \cite[Theorem~4.2.1]{taitthesis}. In other words, up to multiplication by scalars, the basis $\left[ \frac{y}{x^i} \right]$, $i=1, \ldots, g$, of $\hone$, given in Proposition~\ref{theorembasisofhone}, is dual to the basis $\omega_j$, $j=1, \ldots, g$, with respect to Serre duality}.
\end{rmk}

\begin{rmk}
{\em Different bases of $H^1(X,\cO_X)$ are described in \cite[Lemma~6]{sullivan} and \cite[Lemma~6]{madden}. The action of Frobenius on $H^1(X,\cO_X)$ is studied in \cite{bouw}.
}
\end{rmk}

\section{Bases of $H^1_\textrm{dR}(X/k)$}

The object of this section is to give an explicit $k$-basis for the first de-Rham cohomology group $H^1_\mathrm{dR}(X/k)$ using \v{C}ech cohomology. If $p \not= 2$, we will moreover refine our result when another open subset is added to our standard open cover of $X$.

The algebraic de-Rham cohomology of $X$  is defined to be the hypercohomology of the de-Rham complex
\begin{equation}\label{equationderhamcomplex}
 0 \rightarrow \cO_X \xrightarrow{d} \Omega_X \rightarrow 0
\end{equation}
where $d$ denotes the usual differential map $f \mapsto df$.
We use the cover $\cal U$ and the \v{C}ech differentials defined in the previous section to obtain the \v{C}ech bicomplex of \eqref{equationderhamcomplex}:
    \begin{equation}\label{equationcechbicomplex}
    \begin{split}
    \xymatrix{ & 0 \ar[d] & 0 \ar[d] & \\
    0 \ar[r] & \cO_X(U_0) \times \cO_X(U_\infty) \ar[d] \ar[r] & \Omega_X(U_0) \times \Omega_X(U_\infty) \ar[d] \ar[r] & 0 \\
    0 \ar[r] & \cO_X(U_0\cap U_\infty) \ar[d] \ar[r] & \Omega_X(U_0 \cap U_\infty) \ar[r] \ar[d] & 0 \\
    & 0 & 0 &}
    \end{split}
    \end{equation}
By a generalisation of Leray's theorem \cite[Corollaire~12.4.7]{EGA0III} and Serre's affineness criterion \cite[Theorem~5.2.23]{liu}, the first de-Rham cohomology of~$X$ is isomorphic to the first cohomology of the total complex of \eqref{equationcechbicomplex}.
Thus, $\derhamhone$ is isomorphic to the quotient of the space
    \begin{equation}\label{equationderhamspace}
    \begin{split}
    \big\{(\omega_0, \omega_\infty, f_{0\infty}) \in \Omega_X(U_0)\times \Omega_X&(U_\infty) \times \cO_X(U_0 \cap U_\infty)\mid\\
    &df_{0\infty} = \omega_0|_{U_0\cap U_\infty} - \omega_\infty|_{U_0\cap U_\infty} \big\}
    \end{split}
    \end{equation}
by the subspace
    \begin{equation}\label{equationderhamquotient}
    \left\{  (df_0, df_\infty, f_0|_{U_0 \cap U_\infty} -f_\infty|_{U_0 \cap U_\infty} )|f_0 \in \cO_X(U_0), f_\infty \in \cO_X(U_\infty) \right\}.
    \end{equation}
Via this representation of $\derhamhone$ and the isomorphism \eqref{equationcechhoneisomorphismfunctions} we obtain the canonical maps
    \begin{equation}\label{equationinjectionofhzerointoderham}
    i \colon \hzero \ra \derhamhone, \qquad \omega \mapsto [(\omega|_{U_0}, \omega|_{U_\infty}, 0)]
    \end{equation}
and
    \begin{equation}\label{equationsurjectionofderhamontohone}
    p \colon \derhamhone \ra \hone, \qquad [(\omega_0, \omega_\infty, f_{0 \infty})] \mapsto [f_{0 \infty}].
    \end{equation}

The following proposition is equivalent to the more familiar and fancier sounding statement that the Hodge-de-Rham spectral sequence for $X$ degenerates at $E_1$ (see \cite{wedhorn}). This is in fact true for every smooth, proper curve~$X$ over $k$, see example (2) in section~(1.5) of \cite{wedhorn}.
\begin{prop}\label{propshortexactsequence}
    The following sequence is exact:
        \begin{equation}\label{equationderhamcohomologyshortexactseqeunce}
        0 \ra H^0(X,\Omega_X) \xrightarrow{i} \derhamhone \xrightarrow{p} H^1(X,\cO_X) \ra 0.
        \end{equation}
    \end{prop}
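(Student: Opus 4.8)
The plan is to read off exactness directly from the explicit quotient presentation of $\derhamhone$ in \eqref{equationderhamspace} and \eqref{equationderhamquotient}, and thereby to reduce the whole proposition to the single statement that the exterior derivative induces the zero map $d_* \colon \hone \to H^1(X,\Omega_X)$. All other parts of the argument are formal, and this vanishing---which is exactly the degeneration of the Hodge--de-Rham spectral sequence at $E_1$---will be the one genuine obstacle.

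First I would dispose of the elementary verifications. For injectivity of $i$ in \eqref{equationinjectionofhzerointoderham}: if $i(\omega)=0$, then by \eqref{equationderhamquotient} there are $f_0\in\cO_X(U_0)$ and $f_\infty\in\cO_X(U_\infty)$ with $\omega|_{U_0}=df_0$, $\omega|_{U_\infty}=df_\infty$, and $f_0=f_\infty$ on $U_0\cap U_\infty$; since $U_0$ and $U_\infty$ cover $X$, the functions $f_0,f_\infty$ glue to a global section of $\cO_X$, hence to a constant because $H^0(X,\cO_X)=k$, so that $\omega=df_0=0$. The map $p$ in \eqref{equationsurjectionofderhamontohone} is well defined and satisfies $\im(i)\subseteq\ker(p)$ by construction and \eqref{equationcechhoneisomorphismfunctions}. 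For the reverse inclusion, suppose $p([(\omega_0,\omega_\infty,f_{0\infty})])=[f_{0\infty}]=0$ in $\hone$; writing $f_{0\infty}=f_0-f_\infty$ and subtracting the element of \eqref{equationderhamquotient} attached to $(f_0,f_\infty)$, I may assume $f_{0\infty}=0$. The cocycle condition in \eqref{equationderhamspace} then reads $\omega_0=\omega_\infty$ on $U_0\cap U_\infty$, so $\omega_0$ and $\omega_\infty$ glue to a global differential $\omega\in\hzero$ with $i(\omega)$ equal to the given class. Hence the sequence is exact everywhere except possibly at $\hone$.

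It remains to prove that $p$ is surjective, and this is where the content lies. Given a class $[f_{0\infty}]\in\hone$ represented by $f_{0\infty}\in\cO_X(U_0\cap U_\infty)$, a preimage under $p$ is exactly a pair $\omega_0\in\Omega_X(U_0)$, $\omega_\infty\in\Omega_X(U_\infty)$ with $df_{0\infty}=\omega_0-\omega_\infty$ on $U_0\cap U_\infty$; equivalently, the \v{C}ech $1$-cocycle $df_{0\infty}$ for $\Omega_X$ with respect to the cover $\cU$ must be a coboundary. Via Leray's theorem this means precisely that the class $[df_{0\infty}]$ vanishes in $H^1(X,\Omega_X)$. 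Since $[f_{0\infty}]\mapsto[df_{0\infty}]$ is the map $d_*$ induced by $d\colon\cO_X\to\Omega_X$, surjectivity of $p$ is equivalent to the vanishing $d_*=0$.

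I expect this vanishing to be the main obstacle, as it is equivalent to the $E_1$-degeneration of the Hodge--de-Rham spectral sequence. To finish I would invoke the fact that this degeneration holds for every smooth proper curve over $k$, as recorded in \cite{wedhorn}. Alternatively, one could hope to verify $d_*=0$ by hand on the explicit basis $[y/x^i]$ of $\hone$ from Proposition~\ref{theorembasisofhone}, exhibiting each $d(y/x^i)$ as an explicit $\Omega_X$-coboundary for $\cU$; or one could try to identify $d_*$ with the Serre transpose of the map $H^0(X,\cO_X)\to\hzero$ induced by $d$, which vanishes on the constants $H^0(X,\cO_X)=k$. I would expect the first of these to be computationally heavy and the second to require the self-duality of the de-Rham complex, which is of comparable depth, so citing the degeneration is the cleanest route.
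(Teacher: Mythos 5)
Your proposal is correct and follows essentially the same route as the paper: the paper likewise treats the proposition as equivalent to the vanishing of the induced map $H^1(X,\cO_X)\to H^1(X,\Omega_X)$ (i.e., $E_1$-degeneration) and settles it by citing \cite{wedhorn}, while also noting the elementary residue argument and the explicit verification of surjectivity of $p$ in the proof of Theorem~\ref{theorembasisofderham} --- exactly the two alternatives you sketch. Your write-up of the formal steps (injectivity of $i$ and exactness at $\derhamhone$) is accurate and simply makes explicit what the paper leaves to the references.
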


We will call the sequence~\eqref{equationderhamcohomologyshortexactseqeunce} the {\em Hodge-de-Rham short exact sequence}.\\

An elementary proof of Proposition~\ref{propshortexactsequence} (that works for every smooth projective curve) can be found in \cite[Proposition~4.1.2]{taitthesis}; the main ingredient there is just the fact that the residue of differentials of the form~$df$ vanishes at every point of $X$ and that hence the obvious composition $H^1(X,\cO_X) \rightarrow H^1(X, \Omega_X) \overset{\sim}{\rightarrow} k$ is the zero map. For a hyperelliptic curve $X$, the surjectivity of~$p$ will also be verified in the proof of Theorem~\ref{theorembasisofderham} below.

\hspace*{1em}

In order to state a basis of $\derhamhone$,
we now define certain polynomials. To this end, we introduce the notations $f^{\le m}(x):= a_0 + \ldots +a_mx^m$ and $f^{> m}(x):=a_{m+1}x^{m+1}+ \ldots + a_nx^n$ for any polynomial $f(x):= a_0 + \ldots + a_n x^n \in k[x]$ and any $m \ge 0$. Let $1 \leq i \leq g$.

When $p\neq 2$ we define
    \[
    s_i(x) := xf'(x) - 2if(x) \in k[x]
    \]
and put $\psi_i(x):= s_i^{\le i}(x)$ and $\phi_i(x):= s_i^{>i}(x)$ so that $s_i(x) = \psi_i(x) + \phi_i(x)$.

When $p = 2$ we define
    \begin{equation*}\label{equationSi}
    s_i(x,y) := xf'(x) + (xh'(x) + ih(x))y\in k[x]\oplus k[x]y \subseteq k(x,y)
    \end{equation*}
(where $k[x]\oplus k[x]y $ denotes the $k[x]$-module generated by $1$ and $y$)
and put $\psi_i(x,y) := s_i^{\le i}(x,y)$ and $\phi_i(x,y) := s_i^{> i}(x,y)$ where now the operations~$\le i$ and~$> i$ are applied to both the coefficients  $xf'(x)$ and $xh'(x) + ih(x)$. Again we have $s_i(x,y) = \psi_i(x,y)+ \phi_i(x,y)$.

We now give a basis of $\derhamhone$ in terms of the polynomials just introduced and using the presentation of $\derhamhone$ developed above.

    \begin{thm}\label{theorembasisofderham}
    If $p \neq 2$, the residue classes
        \begin{equation}\label{equationhonebasiselementofderhampnot2}
        \gamma_i:= \left[ \left( \frac{\psi_i(x)}{2x^{i+1}y} dx, \frac{-\phi_i(x)}{2x^{i+1}y} dx, \frac{y}{x^i} \right)\right] , \quad i=1, \ldots ,g,
        \end{equation}
    along with the residue classes
        \begin{equation}\label{equationhzerobasiselementofderhampnot2}
         \lambda_i:= \left[ \left( \frac{x^{i}}{y} dx , \frac{x^{i}}{y} dx, 0 \right)\right] , \quad i = 0,\ldots ,g-1,
        \end{equation}
    form a $k$-basis of $\derhamhone$.

    On the other hand, if $p=2$, the residue classes
        \begin{equation}\label{equationhonebasiselementofderhampis2}
        \gamma_i := \left[ \left( \frac{\psi_i(x,y)}{x^{i+1}h(x)} dx, \frac{\phi_i(x,y)}{x^{i+1}h(x)} dx, \frac{y}{x^i} \right)\right], \quad i =1, \ldots , g,
        \end{equation}
    together with the residue classes
        \begin{equation}\label{equationhzerobasiselementofderhampis2}
        \lambda_i := \left[ \left( \frac{x^{i}}{h(x)} dx, \frac{x^{i}}{h(x)} dx, 0 \right)\right], \quad i=0, \ldots, g-1,
        \end{equation}
    form a $k$-basis of $\derhamhone$.
    \end{thm}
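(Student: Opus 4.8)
The plan is to exploit the Hodge-de-Rham short exact sequence~\eqref{equationderhamcohomologyshortexactseqeunce} of Proposition~\ref{propshortexactsequence} together with the two bases already at our disposal: the basis $\omega_{i+1}=x^i\omega$, $i=0,\ldots,g-1$, of $\hzero$ from Lemma~\ref{bais of global holomorphic differentials}, and the basis $[y/x^i]$, $i=1,\ldots,g$, of $\hone$ from Proposition~\ref{theorembasisofhone}. Concretely, I would first check that each triple appearing in \eqref{equationhonebasiselementofderhampnot2}--\eqref{equationhzerobasiselementofderhampis2} really represents a class in $\derhamhone$, i.e.\ lies in the space~\eqref{equationderhamspace}; then observe that $\lambda_i=i(\omega_{i+1})$ and that $p(\gamma_i)=[y/x^i]$ directly from the definitions~\eqref{equationinjectionofhzerointoderham} and~\eqref{equationsurjectionofderhamontohone}; and finally deduce the basis property from a formal diagram chase.

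The elementary half first: the $\lambda_i$ are visibly well defined, since their first two components equal the global holomorphic differential $x^i\omega$ and their third component is $0$, so the cocycle condition holds trivially and $\lambda_i=i(\omega_{i+1})$ is the image under $i$ of a basis vector of $\hzero$. The whole point of the definitions of $s_i$, $\psi_i$ and $\phi_i$ is to make the $\gamma_i$ satisfy the cocycle condition. Indeed, differentiating $y/x^i$ using the Weierstrass relation (so $2y\,dy=f'(x)\,dx$ when $p\neq2$, resp.\ $h(x)\,dy=(h'(x)y+f'(x))\,dx$ when $p=2$) and replacing $y^2$ via~\eqref{Weierstrassequationpnot2}, resp.~\eqref{WeierstrassEquationpis2}, gives exactly
\[ d\!\left(\frac{y}{x^i}\right)=\frac{s_i(x)}{2x^{i+1}y}\,dx \quad\text{resp.}\quad d\!\left(\frac{y}{x^i}\right)=\frac{s_i(x,y)}{x^{i+1}h(x)}\,dx. \]
Since $s_i=\psi_i+\phi_i$, this is precisely the relation $df_{0\infty}=\omega_0-\omega_\infty$ required in~\eqref{equationderhamspace} for the triples defining $\gamma_i$ (using $-1=1$ when $p=2$).

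It then remains to verify the regularity conditions $\omega_0\in\Omega_X(U_0)$ and $\omega_\infty\in\Omega_X(U_\infty)$, and this is where the decomposition $s_i=\psi_i+\phi_i$ does its real work: the low-degree part $\psi_i$ (degree $\le i$) makes $\psi_i/x^{i+1}$ a sum of strictly negative powers of $x$, so its only poles lie above $0$ and $\omega_0$ is regular away from $\pi^{-1}(0)$, i.e.\ on $U_0$; dually, $\phi_i/x^{i+1}$ is a polynomial in $x$, so $\omega_\infty$ has poles only above $\infty$ and is regular on $U_\infty$. When $p\neq2$ this is immediate once one factors out the holomorphic differential $\omega=dx/y$ and notes that multiplying $\omega$ by a negative power of $x$ (resp.\ a polynomial in $x$) cannot create a pole on $U_0$ (resp.\ on $U_\infty$). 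When $p=2$ the same factorisation through $\omega=dx/h(x)$ handles every finite point, including the branch points (the zeros of $h$), since there $y$ and $1/x^{i+1}$ are regular; the genuinely delicate check is regularity at the points above $\infty$, where the summand involving $y$ forces one to bound $\mathrm{ord}_P(y)$ and $\mathrm{ord}_P(dx)$ using Lemma~\ref{order of y at infty} and Lemma~\ref{order of dx above infty}. I expect this order computation at infinity in characteristic $2$ to be the main obstacle; the other cases are routine.

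With all $2g$ classes shown to be well defined, the basis property follows formally. Since $p\circ i=0$, applying $p$ to any relation $\sum_i a_i\lambda_i+\sum_i b_i\gamma_i=0$ yields $\sum_i b_i[y/x^i]=0$, whence all $b_i=0$ by Proposition~\ref{theorembasisofhone}; then $i\big(\sum_i a_i\omega_{i+1}\big)=0$, and injectivity of $i$ together with Lemma~\ref{bais of global holomorphic differentials} force all $a_i=0$. Thus the $2g$ classes are linearly independent, and as $\dim_k\derhamhone=\dim_k\hzero+\dim_k\hone=2g$ by exactness of~\eqref{equationderhamcohomologyshortexactseqeunce}, they form a basis. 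As a by-product the surjectivity of $p$ is re-established, since the $p(\gamma_i)$ already span $\hone$.
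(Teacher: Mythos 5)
Your proposal follows essentially the same route as the paper's proof: verify that each triple lies in the space~\eqref{equationderhamspace} (cocycle identity plus regularity of the two differentials), note that the $\lambda_i$ are $i(x^i\omega)$ and that $p(\gamma_i)=[y/x^i]$, and conclude by the exact sequence~\eqref{equationderhamcohomologyshortexactseqeunce} together with Lemma~\ref{bais of global holomorphic differentials} and Proposition~\ref{theorembasisofhone}. Your computation of $d(y/x^i)$ and your regularity argument for $p\neq 2$ (and at all finite points when $p=2$) match the paper's. The one piece you defer rather than carry out is precisely the crux of the $p=2$ case: the regularity of $\frac{\psi_i(x,y)}{x^{i+1}h(x)}dx$ at the points above $\infty$. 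You correctly identify Lemma~\ref{order of y at infty} and Lemma~\ref{order of dx above infty} as the required inputs, but you should actually add up the orders, separately for $\infty$ unramified and ramified, because the resulting inequality is tight (the lower bound on $\mathrm{ord}_P$ comes out to exactly $0$, using in particular that the coefficient of $x^i$ in $xh'(x)+ih(x)$ vanishes in characteristic $2$ and that $\deg h = g+1$ in the unramified case); a priori the estimate could have failed, so this verification cannot be waved through as routine.
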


\begin{rmk} \hspace*{1em}\\
{\em (a) If $p \not =2$ and $f(x) = x^p -x$, an easy calculation shows that the basis elements given above are the same as those given in Theorem~3.1 of \cite{canonicalrepresentation}.\\
(b) If $p=2$, another basis of $H^1_\mathrm{dR}(X/k)$ is given in \cite[Section~4]{pries}. }
\end{rmk}

\begin{proof} The elements in (\ref{equationhzerobasiselementofderhampnot2}) and (\ref{equationhzerobasiselementofderhampis2}) are the images under the map $i$ of the differentials $\frac{x^i}{y}dx$, $i=0, \ldots, g-1$, and $\frac{x^i}{h(x)}dx$, $i=0, \ldots, g-1$, respectively, which form a basis of $H^0(X, \Omega_X)$ by Lemma~\ref{bais of global holomorphic differentials}. Furthermore, provided the elements in (\ref{equationhonebasiselementofderhampnot2}) and (\ref{equationhonebasiselementofderhampis2}) are well-defined elements of $\derhamhone$, these elements are mapped to the elements $[\frac{y}{x^i}]$, $i=1, \ldots, g$, under $p$, which form a basis of $\hone$ by Proposition~\ref{theorembasisofhone}. By Proposition~\ref{propshortexactsequence}, it therefore suffices to check that the elements in (\ref{equationhonebasiselementofderhampnot2}) and (\ref{equationhonebasiselementofderhampis2}) are well-defined elements of $\derhamhone$.

We first check the equality in ~(\ref{equationderhamspace}). When $p \not=2$, this  is verified  as follows:
  \begin{align*}
        &\left(  \frac{\psi_i(x)}{2x^{i+1}y}  - \frac{-\phi_i(x)}{2x^{i+1}y} \right) dx  =  \frac{s_i(x)}{2x^{i+1}y} dx \\
        &\hspace*{3em}  =  \frac{xf'(x) - {2if(x)}}{2x^{i+1}y} dx
        =  \frac{x^i}{2y} \left( \frac{f'(x)}{x^{2i}} -\frac{2if(x)}{x^{2i+1}} \right)  dx\\
         & \hspace*{3em} =  \frac{x^i}{2y}d\left(\frac{f(x)}{x^{2i}}\right)
         =  \frac{x^i}{2y} d\left(\left(\frac{y}{x^{i}}\right)^2\right)
        =  d\left(\frac{y}{x^{i}}\right).
        \end{align*}
When $p=2$, we obtain
\[h'(x)ydx+h(x)dy=f'(x)dx\]
by differentiating equation~(\ref{WeierstrassEquationpis2}) and then verify the equality in (\ref{equationderhamspace}) as follows (note that we replace all minus signs with plus signs):
        \begin{align*}
        &\left(  \frac{ \psi_i(x,y)}{x^{i+1}h(x)}  +  \frac{\phi_i(x,y)}{x^{i+1}h(x)}  \right) dx =  \frac{s_i(x,y)}{x^{i+1}h(x)}dx \\
        &\hspace*{3em} =  \left( \frac{f'(x)}{x^ih(x)} + \frac{h'(x)y}{x^ih(x)} + \frac{iy}{x^{i+1}} \right) dx
        \\
        & \hspace*{3em} =  \frac{dy}{x^{i}} + \frac{iy}{x^{i+1}} dx
        =  d\left( \frac{y}{x^{i}}\right).
        \end{align*}

It remains to prove that the first two entries of the triples in (\ref{equationhonebasiselementofderhampnot2}) and~(\ref{equationhonebasiselementofderhampis2}) are regular differentials on $U_0$ and $U_\infty$, respectively.

We first consider the case $p \not= 2$. As $\frac{dx}{y}$ is a regular differential on $X= U_0 \cup U_\infty$ by Lemma~\ref{bais of global holomorphic differentials}, it suffices to observe that each of the functions $\frac{\psi_i(x)}{x^{i+1}}$, $i=1, \ldots, g$, is regular on $U_0$ (in fact has a zero at $\infty$) and that each of the functions $\frac{\phi_i(x)}{x^{i+1}}$, $i=1, \ldots, g$, is regular on $U_\infty$.

We now turn to the case $p=2$. As above, we know from Lemma~\ref{bais of global holomorphic differentials} that $\frac{dx}{h(x)}$ is regular on $X=U_0 \cup U_\infty$. Furthermore, for every $i \in \{1, \ldots, g\}$, the function $\frac{\phi_i(x,y)}{x^{i+1}}$ is regular on $U_\infty$ since $y$ is regular on $U_\infty$ and since, by definition of $\phi_i(x,y)$, the $k[x]$-coefficients of $1$ and $y$ in $\phi_i(x,y)$ are divisible by $x^{i+1}$. Hence $\frac{\phi_i(x,y)}{x^{i+1}h(x)} dx$ is regular on $U_\infty$, as was to be shown. It remains to show that $ \frac{\psi_i(x,y)}{x^{i+1}h(x)} dx$ is regular on $U_0$. As $\frac{\psi_i(x,y)}{x^{i+1}}$ and $\frac{dx}{h(x)}$ are regular on $U_0 \cap U_\infty$, this amounts to showing that $ \frac{\psi_i(x,y)}{x^{i+1}h(x)} dx$ is regular above $\infty$.

We first consider the case when $\infty$ is not a branch point of $\pi$.
By Lemma~\ref{order of dx above infty}, the differential~$dx$ has a pole of order $2$ at each of the two points $P_\infty$, $P'_\infty \in X$ above $\infty$. Furthermore, the $k[x]$-coefficient of $1$ in $\psi_i(x,y)$ has a pole at $P_\infty$ and $P'_\infty$ of order at most $i$ and the $k[x]$-coefficient of $y$ has a pole at $P_\infty$ and $P'_\infty$ of order at most $i-1$ since the coefficient of $x^i$ in $xh'(x) + ih(x)$ is zero (remember $\textrm{char}(k)=2$). Moreover, $y$ has a pole at $P_\infty$ and $P'_\infty$ of order at most $g+1$ by Lemma~\ref{order of y at infty}. Finally, $\frac{1}{h(x)}$ has a zero at $P_\infty$ and $P'_\infty$ of order $d=\textrm{deg}(h(x)) = g+1$. Putting all this together we obtain
\begin{align*}
\mathrm{ord}&_{P} \left( \frac{\psi_i(x,y)}{x^{i+1} h(x)}dx  \right)\\
&= \mathrm{ord}_{P}\left(\psi_i(x,y)\right) + \mathrm{ord}_{P}\left(\frac{1}{x^{i+1}} \right)+ \mathrm{ord}_{P}\left(\frac{1}{h(x)}\right) + \mathrm{ord}_{P}(dx)\\
&\ge \min\{-i, -(i-1)-(g+1)\} + (i+1) + (g+1) -2 =0
\end{align*}
for $P \in \{P_\infty, P'_\infty\}$, which
shows that $\frac{\psi_i(x,y)}{x^{i+1}h(x)} dx$ is regular at $P_\infty$ and $P'_\infty$.

We finally assume that $\infty$ is a branch point of $\pi$ and prove that $\frac{\psi_i(x,y)}{x^{i+1}h(x)} dx$ is regular at the unique point $P_\infty \in X$ above $\infty$. By Lemma~\ref{order of dx above infty}, the order of the differential $dx$ at $P_\infty$ is $2(g-1-d)$ where $d= \mathrm{deg}(h(x))$.
For similar reasons as above, the $k[x]$-coefficients of $1$ and $y$ in $\psi_i(x,y)$ have a pole at~$P_\infty$ of order at most $2i$ and $2(i-1)$, respectively, and $\frac{1}{h(x)}$ has a zero at $P_\infty$ of order $2d$. Finally, $y$ has a pole at $P_\infty$ of order at most $2(g+1)$ by Lemma~\ref{order of y at infty}. Putting all this together we obtain
\begin{align*}
\mathrm{ord}&_{P_\infty} \left( \frac{\psi_i(x,y)}{x^{i+1} h(x)}dx  \right)\\
&= \mathrm{ord}_{P_\infty}\left(\psi_i(x,y)\right) + \mathrm{ord}_{P_\infty}\left(\frac{1}{x^{i+1}} \right)+ \mathrm{ord}_{P_\infty}\left(\frac{1}{h(x)}\right) + \mathrm{ord}_{P_\infty}(dx)\\
&\ge \min\{-2i, -2(i-1)-2(g+1)\} + 2(i+1) + 2d + 2(g-1-d) =0,
\end{align*}
which shows that $\frac{\psi_i(x,y)}{x^{i+1}h(x)} dx$ is regular at $P_\infty$.
\end{proof}

In the proofs in the next section, we will need a refined description of the basis elements given in \eqref{equationhonebasiselementofderhampnot2} when another open subset is added to our standard cover $\cU= \{U_0, U_\infty\}$. To this end, we now fix $a \in \PP^1_k \backslash \{0, \infty\}$ and define the covers $\cU':=\{U_a, U_\infty\}$ and $\cU'' := \{U_0, U_a, U_\infty\}$ of $X$.
Similarly to~(\ref{equationderhamspace}) and (\ref{equationderhamquotient}), the first de-Rham cohmology group $\derhamhone$ is then isomorphic to the $k$-vector space
    \begin{multline}\label{equationsixtupleconditions}
    \left\{ (\omega_0, \omega_a, \omega_\infty , f_{0a}, f_{0 \infty},f_{a \infty}) \in  \right.\\
    \Omega_X(U_0) \times \Omega_X(U_a)\times \Omega_X(U_\infty)
    \times \cO_X(U_0 \cap U_a) \times \cO_X(U_0 \cap U_\infty) \times \cO_X(U_a \cap U_\infty) \mid \\
    \left. f_{0a} - f_{0\infty} + f_{a \infty} = 0,\;
    df_{0a} = \omega_0 - \omega_a, \; df_{0\infty} = \omega_0 - \omega_\infty, \; df_{a\infty} = \omega_a - \omega_\infty\right\}
    \end{multline}
quotiented by the subspace
    \begin{multline}\label{equationsixtuplequotient}
    \left\{( df_0, df_a, df_\infty, f_0- f_a, f_0 - f_\infty, f_a - f_\infty ) | \right. \\
    \left.f_0 \in \cO_X(U_0), f_a \in \cO_X(U_a ), f_\infty \in \cO_X(U_\infty)\right\}.
    \end{multline}

We use the notations $\check{H}^1_\mathrm{dR}(\cU)$ and $\check{H}^1_\mathrm{dR}(\cU'')$ for the representations of $\derhamhone$ introduced in (\ref{equationderhamspace}), (\ref{equationderhamquotient}) and (\ref{equationsixtupleconditions}), (\ref{equationsixtuplequotient}), respectively.
The canonical isomorphism $\rho\colon \check{H}^1_\mathrm{dR}(\cU'') \ra \check{H}^1_\mathrm{dR}(\cU)$, is then induced by the projection
    \begin{equation}\label{equationdefinitionofrho}
    \rho \colon (\omega_0, \omega_a, \omega_\infty , f_{0a}, f_{0 \infty},f_{a \infty}) \mapsto (\omega_0, \omega_\infty , f_{0 \infty}).
    \end{equation}

When $p \not=2$, the next proposition explicitly describes the pre-image of the basis elements  $\gamma_i =  \left[ \left ( \frac{\psi_i(x)}{2x^{i+1}y}dx, \frac{-\phi_i(x)}{2x^{i+1}y}dx, \frac{y}{x^i}\right) \right],  i=1,\ldots ,g$, of $\derhamhone$ under~$\rho$. To this end, we define the polynomials
\[g(x) := (x-a)^g, \quad r_i(x) := g^{\le i -1}(x) \quad \textrm{and} \quad t_i(x) := g^{> i-1}(x)\]
in $k[x]$ for $1 \leq i \leq g$ so that $r_i(x) + t_i(x) = (x-a)^g$.

\begin{prop}\label{propbasisoftriplecoverderham}
Let $p \not= 2$. For $i \in \{1, \ldots, g\}$, let
\[\omega_{0i}:= \frac{\psi_i(x)}{2x^{i+1}y}dx, \quad \omega_{\infty i}:= \frac{-\phi_i(x)}{2x^{i+1}y}dx,\]
\[\omega_{ai}:= \frac{(\psi_i(x)t_i(x) - \phi_i(x)r_i(x))(x-a) - 2if(x)(-1)^{g-i}\binom{g}{i} a^{g-i+1}x^i}{2x^{i+1}(x-a)^{g+1}y}dx\]
and
\[f_{0ai}:= \frac{r_i(x)y}{x^i(x-a)^g}, \quad  f_{0 \infty i}:= \frac{y}{x^i}, \quad f_{a \infty i}:= \frac{t_i(x)y}{x^i(x-a)^g}.\]
Then we have:
\begin{equation}\label{preimage}
\rho^{-1}(\gamma_i) =[( \omega_{0 i}, \omega_{a i}, \omega_{\infty i}, f_{0 a i}, f_{0 \infty i}, f_{a \infty i} )].
\end{equation}
\end{prop}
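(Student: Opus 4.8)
The key observation is that the map $\rho$ of \eqref{equationdefinitionofrho} is merely the projection onto the $(\omega_0,\omega_\infty,f_{0\infty})$-components, and these components of the proposed six-tuple are literally $\bigl(\frac{\psi_i(x)}{2x^{i+1}y}dx,\frac{-\phi_i(x)}{2x^{i+1}y}dx,\frac{y}{x^i}\bigr)$, i.e.\ the representative of $\gamma_i$ from \eqref{equationhonebasiselementofderhampnot2}. Since $\rho$ is an isomorphism defined on the level of representatives, the identity \eqref{preimage} is an immediate consequence, \emph{provided} the six-tuple $(\omega_{0i},\omega_{ai},\omega_{\infty i},f_{0ai},f_{0\infty i},f_{a\infty i})$ genuinely belongs to the space \eqref{equationsixtupleconditions}. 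Hence the entire proof reduces to verifying the four cocycle relations listed in \eqref{equationsixtupleconditions} together with the regularity of the six entries on their respective open sets.

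The plan is to dispose of the cocycle relations first. The relation $f_{0ai}-f_{0\infty i}+f_{a\infty i}=0$ follows immediately from $r_i(x)+t_i(x)=(x-a)^g$: the two $a$-denominators combine and cancel, leaving $\frac{y}{x^i}-\frac{y}{x^i}=0$. The relation $df_{0\infty i}=\omega_{0i}-\omega_{\infty i}$ is exactly the identity $d\bigl(\frac{y}{x^i}\bigr)=\frac{s_i(x)}{2x^{i+1}y}dx$ already established in the proof of Theorem~\ref{theorembasisofderham}. The substantive relation is $df_{0ai}=\omega_{0i}-\omega_{ai}$. Here I would compute $df_{0ai}=d\bigl(\frac{r_i(x)y}{x^i(x-a)^g}\bigr)$ using $2y\,dy=f'(x)\,dx$ and the substitution $y=f(x)/y$, producing a differential of the shape $(\text{rational function})\cdot\frac{dx}{y}$, and then compare it with $\omega_{0i}-\omega_{ai}$. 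Since $\Omega_{K(X)/k}$ is one-dimensional over $K(X)$ with basis $dx$, this reduces to a single rational-function identity, which one checks by clearing the common denominator $2x^{i+1}(x-a)^{g+1}y$ and verifying the resulting polynomial identity; the explicit numerator of $\omega_{ai}$ is precisely what makes this identity hold. Finally, the relation $df_{a\infty i}=\omega_{ai}-\omega_{\infty i}$ follows formally by subtracting the first two relations and invoking $f_{a\infty i}=f_{0\infty i}-f_{0ai}$.

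Next I would check regularity. The function $f_{0\infty i}=\frac{y}{x^i}$ is regular on $U_0\cap U_\infty$ by Proposition~\ref{theorembasisofhone}. On $U_0\cap U_a$ the only points where $f_{0ai}$ could fail to be regular lie above $\infty$ (as $x$ and $x-a$ are units elsewhere on this set); there the bound $\deg r_i\le i-1$ together with Lemma~\ref{order of y at infty} gives $\mathrm{ord}_P(f_{0ai})\ge 0$. On $U_a\cap U_\infty$ the only suspect points lie above $0$, and there $x^i\mid t_i(x)$ makes $\frac{t_i(x)}{x^i}$ a polynomial while $y$ is regular on $U_\infty$, so $f_{a\infty i}$ is regular. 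The differentials $\omega_{0i}$ and $\omega_{\infty i}$ are regular on $U_0$ and $U_\infty$ respectively by the proof of Theorem~\ref{theorembasisofderham}. Writing $\omega_{ai}=\frac{N}{2x^{i+1}(x-a)^{g+1}}\cdot\frac{dx}{y}$ with $N:=(\psi_it_i-\phi_ir_i)(x-a)-2if(x)(-1)^{g-i}\binom{g}{i}a^{g-i+1}x^i$, and recalling that $\frac{dx}{y}$ is holomorphic, regularity of $\omega_{ai}$ on $U_a=X\setminus\pi^{-1}(a)$ reduces to regularity at the points above $0$ and above $\infty$, the factor $(x-a)^{g+1}$ being a unit there.

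I expect this final regularity statement for $\omega_{ai}$ to be the main obstacle, and it is where the correction term $-2if(x)(-1)^{g-i}\binom{g}{i}a^{g-i+1}x^i$ earns its keep. For the points above $0$ I would show $x^{i+1}\mid N$: since $x^i\mid t_i(x)$ and $x^{i+1}\mid\phi_i(x)$ by their definitions as truncations, the difference $\psi_it_i-\phi_ir_i$ already vanishes to order $i$ at $x=0$, with coefficient of $x^i$ equal to $\psi_i(0)\binom{g}{i}(-a)^{g-i}$, where $\psi_i(0)=-2if(0)$; multiplying by $(x-a)$ and subtracting the correction term (whose $x^i$-coefficient is $-2if(0)(-1)^{g-i}\binom{g}{i}a^{g-i+1}$) exactly annihilates the order-$i$ term, so $x^{i+1}\mid N$. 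For the points above $\infty$ I would bound $\deg N\le 2g+i+1$: the \emph{same} constant $(-1)^{g-i}\binom{g}{i}a^{g-i+1}$ forces the would-be top coefficient of $N$ in degree $2g+i+2$ (relevant when $\deg f=2g+2$) to vanish via the identity $\frac{g+1-i}{i}\binom{g}{i-1}=\binom{g}{i}$, and this degree bound exhibits $\omega_{ai}$ near $\infty$ as a $k$-linear combination of the holomorphic differentials $\frac{dx}{y},\ldots,x^{g-1}\frac{dx}{y}$, hence regular there by Lemma~\ref{bais of global holomorphic differentials}. Combining the verified cocycle relations with these regularity statements places the six-tuple in \eqref{equationsixtupleconditions}, and applying $\rho$ then yields \eqref{preimage}.
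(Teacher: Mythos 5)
Your reduction to showing that the six-tuple is a well-defined element of \eqref{equationsixtupleconditions}, your verification of the three cocycle relations involving $f_{0ai}$, $f_{0\infty i}$, $f_{a\infty i}$, and your regularity checks for $f_{0ai}$, $f_{a\infty i}$, $\omega_{0i}$, $\omega_{\infty i}$ all coincide with the paper's proof. Where you genuinely diverge is the regularity of $\omega_{ai}$ on $U_a$. You prove it head-on: divisibility of the numerator $N$ by $x^{i+1}$ handles the points above $0$, and the degree bound $\deg N\le 2g+i+1$ handles the points above $\infty$ by comparison with the holomorphic differential $x^{g-1}\frac{dx}{y}$. Both computations are correct as you sketch them --- the cancellation above $0$ comes from $\psi_i(0)=-2if(0)$ together with the $x^i$-coefficient $\binom{g}{i}(-a)^{g-i}$ of $t_i$, and the vanishing of the degree-$(2g+i+2)$ coefficient (only relevant when $\deg f=2g+2$) comes from the integer identity $(g+1-i)\binom{g}{i-1}=i\binom{g}{i}$, which is the safe way to phrase your fraction in characteristic $p$. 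The paper instead gets this regularity for free from the cocycle relations: $\omega_{ai}=\omega_{0i}-df_{0ai}$ is regular on $U_0\cap U_a$, $\omega_{ai}=\omega_{\infty i}+df_{a\infty i}$ is regular on $U_a\cap U_\infty$, and these two sets cover $U_a$ since $U_0\cup U_\infty=X$; that route is shorter, while yours has the merit of independently confirming that the correction term $-2if(x)(-1)^{g-i}\binom{g}{i}a^{g-i+1}x^i$ is exactly right. The one caveat is that you defer the single substantive computation, $df_{0ai}=\omega_{0i}-\omega_{ai}$, to ``clear denominators and check''; this would indeed work, but it is where the paper does its real work, via the structural identity $r_i'(x)(x-a)-g\,r_i(x)=a\,b_{i-1}x^{i-1}$ with $b_{i-1}=(-1)^{g-i}i\binom{g}{i}a^{g-i}$ the coefficient of $x^{i-1}$ in $\frac{d}{dx}(x-a)^g$, which is precisely what produces the correction term in $\omega_{ai}$. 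You should carry out that identity (or the equivalent polynomial check) explicitly for the argument to be complete.
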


\begin{rmk}\label{mistake}
{\em This description of $\rho^{-1}(\gamma_i)$ does not generalise the description given  in Lemma~3.3 of \cite{canonicalrepresentation} in case of the hyperelliptic curve $y^2 = x^p - x$. In fact, the proof of that lemma seems to contain various mistakes.}
\end{rmk}

\begin{proof}
We fix $i \in \{1, \ldots, g\}$. We obviously only need to show that the sextuple on the right-hand side of (\ref{preimage}) is a well-defined element of the space~(\ref{equationsixtupleconditions}).

From the proof of Theorem \ref{theorembasisofderham} { we already know that ${d(f_{0 \infty i}) = \omega_{0 i} - \omega_{\infty i}}$ and that ${f_{0 \infty i}, \omega_{0 i}}$ and ${\omega_{\infty i}}$ are regular on the appropriate open sets}.

Since $r_i(x)+t_i(x)=(x-a)^g$, we have
        \[
       {f_{0 a i} - f_{0 \infty i}+ f_{a \infty i}  = }\frac{r_i(x)y}{x^i(x-a)^g} - \frac{y}{x^i} + \frac{t_i(x)y}{x^i(x-a)^g}  = {0},
        \]
as desired.

The function $f_{0ai}$ is obviously regular above $\PP^1_k \backslash \{0, a, \infty\}$. We furthermore observe that $\mathrm{ord}_\infty\left(\frac{r_i(x)}{x^i (x-a)^g}\right) \ge -(i-1)  +i +g = g+1$ and that, by Lemma~\ref{order of y at infty}, the order of $y$ above $\infty$ is at least $-2(g+1)$ or at least $-(g+1)$ depending on whether $\infty$ is a branch point of $\pi$ or not. Thus, ${f_{0ai}}$ { is regular} above $\infty$ and hence { on ${U_0 \cap U_a}$}.

As above, the function $f_{a \infty i}$ is regular above $\PP^1_k \backslash \{0, a, \infty\}$. Furthermore, the functions $\frac{t_i(x)}{x^i}$, $y$ and $\frac{1}{(x-a)^g}$ are obviously regular above $0$. Therefore, ${f_{a \infty i}}$ { is regular} above $0$ as well and hence { on} ${U_a \cap U_\infty}$.

{ We next show that} ${df_{0 a i} = \omega_{0 i} - \omega_{a i}}$.
Using the product rule and the chain rule we obtain
        \begin{align*}
        & df_{0 a i}  = d \left( \frac{r_i(x)y}{x^i(x-a)^g} \right) \\
        & = \frac{r_i(x)}{x^i(x-a)^g}dy + d\left( \frac{r_i(x)}{x^i(x-a)^g} \right) y\\
        & = \frac{f'(x)r_i(x)}{2x^i(x-a)^gy}dx + \left( \frac{r_i'(x)}{x^i(x-a)^g} -\frac{i r_i(x)}{x^{i+1}(x-a)^g} - \frac{gr_i(x)}{x^i(x-a)^{g+1}}\right) ydx \\
        & = \frac{xf'(x)r_i(x)(x-a) + 2f(x)\left(xr_i'(x)(x-a) - ir_i(x)(x-a) - gxr_i(x)\right)}{2x^{i+1}(x-a)^{g+1}y} dx.
        \end{align*}
We now recall that
        \[
        xf'(x) - 2if(x)
        =\psi_i(x) + \phi_i(x).
        \]
We furthermore recall that $r_i(x) = g^{\le i-1}(x)$ where $g(x) = (x-a)^g$. Therefore
\begin{align*}
r'_i(x)&\cdot (x-a) - g \cdot r_i(x)\\
&= \left[g'(x)\right]^{\le i-2}\cdot (x-a) - g \cdot g^{\le i-1}(x)\\
&= \left(\left[g'(x)\cdot (x-a)\right]^{\le i-1} + a\cdot b_{i-1}\cdot x^{i-1} \right) - g \cdot g^{\le i-1}(x)\\
&= a\cdot b_{i-1}\cdot x^{i-1}
\end{align*}
where $b_{i-1} = (-1)^{g-i} i \binom{g}{i} a^{g-i}$ denotes the coefficient of $x^{i-1}$ in $g'(x)$.

Thus we obtain
\begin{align*}
&df_{0 a i} = \frac{(\psi_i(x) + \phi_i(x)) r_i(x)(x-a) +2 f(x) (ab_{i-1} x^i)}{2x^{i+1}(x-a)^{g+1}y} dx\\
&= \frac{\psi_i(x) \left((x-a)^{g+1} - t_i(x)(x-a)\right) +\phi_i(x) r_i(x)(x-a) + 2 f(x) (ab_{i-1} ) x^i}{2x^{i+1}(x-a)^{g+1}y}dx\\
&= \frac{\psi_i(x)}{2x^{i+1}y}dx - \frac{(\psi_i(x)t_i(x) - \phi_i(x)r_i(x))(x-a) - 2 f(x) (ab_{i-1} ) x^i}{2x^{i+1}(x-a)^{g+1}y}dx\\
&= \omega_{0i} -\omega_{ai},
\end{align*}
as claimed.

From the above we moreover obtain that
            \[
        {df_{a \infty i} =} df_{0 \infty i} - df_{0 a i} = (\omega_{0 i} - \omega_{\infty i }) - (\omega_{0 i} - \omega_{a i}) = {\omega_{a i} - \omega_{\infty i}}.
        \]

Finally, $\omega_{ai}$ is regular on $U_a$ because $\omega_{ai}=\omega_{0i}-df_{0ai}$ and $\omega_{ai}$ is hence regular on $U_0 \cap U_a$ and because $\omega_{ai}= \omega_{\infty i} + df_{a \infty i}$ and $\omega_{ai}$ is hence regular on $U_a \cap U_\infty$.
\end{proof}

\section{Actions on $\derhamhone$}

In this section we study the action of certain automorphisms on $\derhamhone$. We first prove that the hyperelliptic involution acts by multiplication by $-1$ on $\derhamhone$ when $p\not= 2$ and as the identity when $p=2$. We then give a family of hyperelliptic curves for which the Hodge-de-Rham short exact sequence \eqref{equationderhamcohomologyshortexactseqeunce} does not split equivariantly.

\begin{thm}\label{thmactionhyperelliptic}
The hyperelliptic involution acts on $\derhamhone$ by multiplication by $-1$.
\end{thm}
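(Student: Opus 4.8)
The plan is to exploit the explicit \v{C}ech-de-Rham basis of $\derhamhone$ given in Theorem~\ref{theorembasisofderham} and to compute directly how the hyperelliptic involution $\iota$ acts on each basis element. Recall that $\iota$ is the nontrivial automorphism of $X$ over $\PP^1_k$; it fixes $x$ and, in the case $p\neq 2$, sends $y\mapsto -y$ (since $y^2=f(x)$ forces $\iota^*y=\pm y$ and $\iota\neq\mathrm{id}$), while in the case $p=2$ it sends $y\mapsto y+h(x)$ (the two roots of $Y^2-h(x)Y-f(x)=0$ sum to $h(x)$). In particular $\iota$ fixes each open set $U_0,U_\infty,U_0\cap U_\infty$ of our cover $\cU$ setwise, so it acts on the representation of $\derhamhone$ as the first cohomology of the total complex \eqref{equationcechbicomplex} by acting entrywise on the triples $(\omega_0,\omega_\infty,f_{0\infty})$ via pullback by $\iota^*$. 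The strategy is thus simply to apply $\iota^*$ to the two families of representatives and check that, modulo the coboundary subspace \eqref{equationderhamquotient}, each basis element is sent to its negative.

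First I would treat the case $p\neq 2$, which should be immediate. Since $\iota^*x=x$ and $\iota^*y=-y$, the holomorphic-differential representatives $\lambda_i=\big[\big(\tfrac{x^i}{y}dx,\tfrac{x^i}{y}dx,0\big)\big]$ are each multiplied by $-1$ entrywise, so $\iota^*\lambda_i=-\lambda_i$ on the nose; this is of course consistent with the well-known fact that $\iota$ acts as $-1$ on $H^0(X,\Omega_X)$. For the classes $\gamma_i$ in \eqref{equationhonebasiselementofderhampnot2}, note that $\psi_i(x),\phi_i(x)\in k[x]$ are fixed by $\iota^*$, while the factor $\tfrac{1}{y}$ in the first two entries and the third entry $\tfrac{y}{x^i}$ all pick up an overall sign under $y\mapsto -y$. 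Hence $\iota^*$ multiplies the entire triple representing $\gamma_i$ by $-1$, giving $\iota^*\gamma_i=-\gamma_i$ directly at the level of representatives. Since the $\gamma_i$ and $\lambda_i$ form a basis, this proves $\iota$ acts by $-1$ on all of $\derhamhone$ when $p\neq 2$.

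The genuinely interesting case is $p=2$, where $-1=1$, so the assertion is that $\iota$ acts as the identity; here the entrywise computation will not directly give back the same representative and I expect this to be the main obstacle. Under $\iota^*y=y+h(x)$, the third entries transform as $\tfrac{y}{x^i}\mapsto \tfrac{y}{x^i}+\tfrac{h(x)}{x^i}$, and in the first two entries the term $s_i(x,y)=xf'(x)+(xh'(x)+ih(x))y$ acquires the extra summand $(xh'(x)+ih(x))h(x)$, so $\iota^*\gamma_i-\gamma_i$ is represented by a triple whose third entry is $\big[\tfrac{h(x)}{x^i}\big]$. The plan is to verify that this difference is a coboundary, i.e.\ lies in the subspace \eqref{equationderhamquotient}: concretely I would exhibit functions $f_0\in\cO_X(U_0)$, $f_\infty\in\cO_X(U_\infty)$ with $f_0-f_\infty=\tfrac{h(x)}{x^i}$ and $df_0,df_\infty$ matching the differential parts of $\iota^*\gamma_i-\gamma_i$. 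The natural candidate is to split $h(x)=h^{\le i-1}(x)+h^{>i-1}(x)$ (the operations introduced before Theorem~\ref{theorembasisofderham}) and set $f_\infty=\tfrac{h^{>i-1}(x)}{x^i}$, which is regular on $U_\infty$, and $f_0$ the regular-on-$U_0$ complement, checking that their exterior derivatives reproduce exactly the differences of the first two entries (using $h'(x)^2dx=dh^2$ type identities valid in characteristic $2$). Establishing this coboundary relation—keeping careful track of which powers of $x$ land in $\cO_X(U_0)$ versus $\cO_X(U_\infty)$ and confirming the differential bookkeeping—is the crux; once it is done, $\iota^*\gamma_i=\gamma_i$ in $\derhamhone$, and since $\iota^*\lambda_i=\lambda_i$ is clear (the entries involve only $x$ and $\tfrac{1}{h(x)}$, both fixed by $\iota^*$), the involution acts as the identity, completing the unified statement that $\iota$ acts by multiplication by $-1$ for all $p$.
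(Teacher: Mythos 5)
Your proposal is correct and follows essentially the same route as the paper: the $p\neq 2$ case is handled entrywise on the representatives from Theorem~\ref{theorembasisofderham}, and for $p=2$ the difference $\sigma^*\gamma_i-\gamma_i$ is exhibited as a coboundary obtained by splitting $\frac{h(x)}{x^i}$ into parts regular on $U_0$ and $U_\infty$. The only cosmetic difference is that you split $h$ at degree $i-1$ while the paper splits at degree $i$; these give the same coboundary because the coefficient of $x^i$ in $xh'(x)+ih(x)$ vanishes in characteristic $2$, so your deferred verification goes through exactly as in the paper.
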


\begin{proof}
    Recall that the hyperelliptic involution is the unique non-trivial automorphism $\sigma$ of $X$ such that $ \pi \circ \sigma= \pi$.

    If $p\not= 2$, the  involution $\sigma$ acts on $K(X)$ by $(x,y) \mapsto (x,-y)$.
    Hence, $\sigma$ maps each entry of the triples in (\ref{equationhonebasiselementofderhampnot2}) and (\ref{equationhzerobasiselementofderhampnot2}) to its negative. Thus, the $p \not= 2$ part of Theorem~\ref{theorembasisofderham} implies Theorem~\ref{thmactionhyperelliptic}.

    If $p =2$, the involution $\sigma$ acts on $K(X)$ by $(x,y) \mapsto (x,y+h(x))$. In particular, it fixes the basis elements (\ref{equationhzerobasiselementofderhampis2}) of $\derhamhone$. According to the $p=2$ part of Theorem~\ref{theorembasisofderham}, it remains to show that $\sigma$ also fixes the residue classes $[(\omega_{0i}, \omega_{\infty i}, f_{0 \infty i})]$, $i=1, \ldots, g$, in
    (\ref{equationhonebasiselementofderhampis2}). For $i \in \{1, \ldots, g\}$, this follows from the description of $\derhamhone$ given in (\ref{equationderhamspace}) and (\ref{equationderhamquotient}) and from the equation
    \begin{align*}
    \sigma((\omega_{0i}, \omega_{\infty i}, f_{0 \infty i})) &- (\omega_{0i}, \omega_{\infty i}, f_{0 \infty i}) \\
    &= \left(d \left(\frac{h^{\le i}(x)}{x^i}\right), d\left(\frac{h^{> i}(x)}{x^i}\right), \frac{h^{\le i}(x)}{x^i} -\frac{ h^{> i}(x)}{x^i}\right)
    \end{align*}
    which in turn is verified in the following three lines (where we replace all minus signs with plus signs):
     \[\sigma \left( \frac{\psi_i(x,y)}{x^{i+1}h(x)} dx\right) + \frac{\psi_i(x,y)}{x^{i+1}h(x)}dx  = \frac{[xh'(x) + ih(x)]^{\le i}h(x)}{x^{i+1}h(x)}dx = d\left( \frac{h^{\le i}(x)}{x^i} \right)\]
     \[\sigma \left( \frac{\phi_i(x,y)}{x^{i+1}h(x)} dx\right) + \frac{\phi_i(x,y)}{x^{i+1}h(x)}dx  = \frac{[xh'(x) + ih(x)]^{> i}h(x)}{x^{i+1}h(x)}dx = d\left( \frac{h^{> i}(x)}{x^i} \right)\]
     \[\sigma\left(\frac{y}{x^i}\right) + \frac{y}{x^i} = \frac{h(x)}{x^i}.\]
\end{proof}

\begin{rmk}{\em
If $p\not=2$, Theorem~\ref{thmactionhyperelliptic} can also be proved as follows.
By Lemma~\ref{bais of global holomorphic differentials}, the involution $\sigma$ acts by multiplication by $-1$ on $H^0(X,\Omega_X)$. By Serre duality, it then acts by multiplication by $-1$ also on $H^1(X, \cO_X)$. Finally, by Maschke's Theorem (for the cyclic group of order $2$) applied to the Hodge-de-Rham short exact sequence~\eqref{equationderhamcohomologyshortexactseqeunce}, it acts by multiplication by $-1$ also on $H^1_\mathrm{dR}(X/k)$.}
\end{rmk}

Before we state the main result of this paper, we recall that any automorphism $\tau$ of $X$ induces a map $\bar \tau \colon \mathbb{P}_k^1 \ra \mathbb{P}_k^1$ since $\mathbb {P}_k^1$ is the quotient of~$X$ by the hyperelliptic involution and since the hyperelliptic involution $\sigma$ belongs to the centre of $\aut(X)$ (see \cite[Corollary~7.4.31]{liu}). The following commutative diagram visualises this situation:
    \[
    \xymatrix{
    X \ar[r]^\tau \ar[d]^\pi &  X \ar[d]^\pi \\
    \mathbb P^1_k \ar[r]^{\bar{\tau}}& \mathbb P_k^1
    }  \]

\begin{thm}\label{theoremsplittingtheorem} Let $p \ge 3$. We assume that the degree of the polynomial~$f(x)$ defining the hyperelliptic curve $X$ is odd. We furthermore assume that there exists $\tau \in \aut(X)$ such that the induced map $\bar{\tau}\colon \mathbb{P}_k^1 \ra \mathbb{P}_k^1$ is given by $x \mapsto x+a$ for some $a \not= 0$. Let $G$ denote the subgroup of $\aut(X)$ generated by~$\tau$. Then the Hodge-de-Rham short exact sequence~\eqref{equationderhamcohomologyshortexactseqeunce} does not split as a sequence of $k[G]$-modules.
\end{thm}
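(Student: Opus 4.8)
The plan is to show that the $\tau$-fixed class $[y/x^g]\in\hone$ has no $\tau$-fixed preimage in $\derhamhone$ under the map $p$ of~\eqref{equationsurjectionofderhamontohone}. This suffices: a $k[G]$-equivariant section $s$ of the Hodge--de-Rham sequence~\eqref{equationderhamcohomologyshortexactseqeunce} would send the $\tau$-fixed vector $[y/x^g]$ to the $\tau$-fixed vector $s([y/x^g])$, which is a preimage of exactly this kind. Conceptually, this says that the extension~\eqref{equationderhamcohomologyshortexactseqeunce} merges the $\tau$-fixed line spanned by $[y/x^g]$ with a Jordan block inside $\hzero$, so that $\ker(\tau-1)$ on $\derhamhone$ is strictly smaller than on $\hzero\oplus\hone$.

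Before carrying this out I would reduce to the case $\tau^*y=y$. By Proposition~\ref{lemmatauactsbyplusminusoneony} we have $\tau^*x=x+a$ and $\tau^*y=\epsilon y$ with $\epsilon\in\{\pm1\}$, and $\bar\tau$ has order $p$, so $G$ has order $p$ (if $\epsilon=1$) or $2p$ (if $\epsilon=-1$, in which case $\tau^p=\sigma$). In the second case I would replace $\tau$ by $\tau\sigma$, which still induces $x\mapsto x+a$, satisfies $(\tau\sigma)^*y=y$ (as $\sigma^*y=-y$), and generates the subgroup $\langle\tau^2\rangle$ of order $p$. Since restricting a $k[G]$-splitting to a subgroup yields a $k[\langle\tau^2\rangle]$-splitting, proving non-splitting for the subgroup implies it for $G$. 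Thus I may assume $\epsilon=1$ and $G=\langle\tau\rangle$ of order $p$.

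Now $p(\gamma_g)=[y/x^g]$ by Theorem~\ref{theorembasisofderham}, and a short expansion of $y/(x+a)^g$ in the basis of Proposition~\ref{theorembasisofhone} shows $[y/x^g]$ is $\tau$-fixed (the lower-order terms $y/x^{g+m}$, $m\ge1$, are regular on $U_0$ and so vanish in $\hone$). Since $\ker p=\mathrm{im}(i)$, every preimage of $[y/x^g]$ has the form $\gamma_g+i(\omega)$ with $\omega\in\hzero$, and because $i$ and $p$ are $\tau$-equivariant such a preimage is $\tau$-fixed if and only if $(\tau-1)\gamma_g=-\,i\big((\tau-1)\omega\big)$. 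As $p\big((\tau-1)\gamma_g\big)=(\tau-1)[y/x^g]=0$, the left-hand side lies in $\mathrm{im}(i)$, say $(\tau-1)\gamma_g=i(c)$ with $c\in\hzero$; the equation is then solvable for $\omega$ precisely when $c$ lies in the image of $(\tau-1)$ on $\hzero$. Under the identification of $\hzero$ with $k[x]_{<g}$ via the basis $\tfrac{x^i}{y}dx$ of Lemma~\ref{bais of global holomorphic differentials}, $\tau$ acts by the translation $x\mapsto x+a$, so $(\tau-1)$ is the difference operator $q(x)\mapsto q(x+a)-q(x)$, whose image is contained in $k[x]_{\le g-2}$. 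Hence it is enough to prove that the component of $c$ along the top basis vector $\tfrac{x^{g-1}}{y}dx$ is non-zero.

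The crux is therefore the computation of this single coefficient, and I expect it to be the main obstacle. The difficulty is that $\tau$ does not preserve the cover $\cU=\{U_0,U_\infty\}$ but maps $U_0$ to $U_a$; to express $(\tau-1)\gamma_g$ in the $\cU$-model I would transport $\gamma_g$ through the explicit sextuple $\rho^{-1}(\gamma_g)$ of Proposition~\ref{propbasisoftriplecoverderham} on the refinement $\{U_0,U_a,U_\infty\}$, apply $\tau$, and project back via $\rho$ to read off the $\hzero$-component $c$. A cleaner bookkeeping is available through the de-Rham cup product: it restricts to Serre duality on $\hzero\times\hone$, so by the duality of the bases noted after Proposition~\ref{theorembasisofhone} the wanted coefficient equals $\langle\langle(\tau-1)\gamma_g,\gamma_g\rangle\rangle=\langle\langle\tau\gamma_g,\gamma_g\rangle\rangle$, the self-pairing term vanishing since the cup product is alternating. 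Either route reduces the claim to a finite residue computation. Here the hypothesis that $\deg f$ is odd is decisive: it ensures that $\infty$ is a branch point with a single, totally ramified preimage $P_\infty$ that is fixed by $\tau$, and it is precisely the local contribution at $P_\infty$ that makes the top coefficient non-zero; Remark~\ref{RemarkBranchPoint} shows that in the even-degree case this coefficient can vanish.
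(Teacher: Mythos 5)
Your skeleton coincides with the paper's: one shows $\left[\frac{y}{x^g}\right]$ is $\tau^*$-fixed, observes that every preimage under $p$ has the form $\gamma_g+i(\omega)$, and reduces to showing that the $\hzero$-component $c$ of $(\tau^*-1)\gamma_g$ does not lie in the image of $\tau^*-1$ on $\hzero$, which (since $\tau^*$ acts on the basis $\frac{x^{j}}{y}dx$ by a unipotent upper-triangular matrix) amounts to the coefficient of $\frac{x^{g-1}}{y}dx$ in $c$ being non-zero. The paper phrases this as ``comparing coefficients of $\lambda_{g-1}$ forces $c_{g-1}=0$,'' but that is the same linear algebra. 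Your preliminary reduction to $\tau^*y=y$ and the cup-product/Serre-duality reformulation of the wanted coefficient as $\langle\langle\tau^*\gamma_g,\gamma_g\rangle\rangle$ are both sound and are genuine (minor) improvements in presentation.

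The genuine gap is that the decisive computation is never performed, and the mechanism you propose for the non-vanishing is not the one that actually works. Carrying out your own plan (transport $\gamma_g$ through the sextuple of Proposition~\ref{propbasisoftriplecoverderham}, apply $\tau^*$, project back, and compare the $\Omega_X(U_\infty)$-entries of the cocycles --- which requires first noting that two representatives in \eqref{equationderhamspace} of the same class with equal third entries are already equal, since a coboundary with vanishing third entry is zero) leads to the identity $\phi_g(x+a)x^{g+1}=\phi_g(x)(x+a)^{g+1}-2(x+a)^{g+1}x^{g+1}\sum_i c_ix^i$; because $\deg f=2g+1$, the top of $s_g(x)=xf'(x)-2gf(x)$ is $x^{2g+1}+0\cdot x^{2g}+\cdots$, and the coefficient of $x^{3g+1}$ yields $c_{g-1}=-\tfrac{g}{2}a$. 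This is non-zero not because ``$\infty$ is a totally ramified fixed point'' but because $g\not\equiv 0\bmod p$, which follows from $2g+1=\deg f=p\cdot\deg q\equiv 0\bmod p$ via Proposition~\ref{lemmatauactsbyplusminusoneony} --- an arithmetic input your sketch never identifies and without which the argument collapses (if $p$ divided $g$, the coefficient $-ga/2$ would vanish and no contradiction would arise). A smaller imprecision: the ``short expansion of $y/(x+a)^g$'' used to show $\left[\frac{y}{x^g}\right]$ is fixed is not legitimate inside $\cO_X(U_0\cap U_\infty)=k[x^{\pm1},y]$, since $1/(x+a)^g$ is not a Laurent polynomial; here too you must pass through the refined cover $\{U_0,U_a,U_\infty\}$, exactly as the paper does.
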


The following example explicitly describes hyperelliptic curves that allow an automorphism $\tau$ as assumed in the previous theorem.

\begin{example}\label{examplesofsutomorpohism}
{\em
Let $p \ge 3$, let $a \in k^\times$ and let $q(z) \in k[z]$ be any monic polynomial without repeated roots. Then $f(x) := q(x^p - a^{p-1}x) \in k[x]$ obviously has no repeated roots either and thus $y^2=f(x)$ defines a hyperelliptic curve~$X$. Moreover, $(x,y) \mapsto (x+a,y)$  defines an automorphism~$\tau$ of $X$ and the induced automorphism~$\bar{\tau}$ is given by $x \mapsto x+a$.}
\end{example}

\begin{rmk}\label{RemarksMainTheorem}{\em \hspace{1em}\\
(a) When applied to $a=1$ and to the hyperelliptic curve $X$ given by $q(z) = z$ in Example~\ref{examplesofsutomorpohism}, Theorem~\ref{theoremsplittingtheorem} becomes the main theorem of \cite{canonicalrepresentation}.\\
(b) Theorem~\ref{theoremsplittingtheorem} and Example~\ref{examplesofsutomorpohism} imply that, for every algebraically closed field $k$ of characteristic $p\ge 3$, there exist infinitely many $g \ge 2$ and hyperelliptic curves over~$k$ of genus $g$ for which the Hodge-de-Rham spectral sequence does not split equivariantly.\\
(c) Suppose $g \ge 2$ is given. If $p$ is a prime divisor of $2g+1$ then, according to Theorem~\ref{theoremsplittingtheorem} and Example~\ref{examplesofsutomorpohism}, every monic polynomial $q(z) \in k[z]$ of degree $(2g+1)/p$ without repeated roots defines a hyperelliptic curve $X$ of genus $g$ for which the Hodge-de-Rham sequence does not split equivariantly.
}\end{rmk}

The following proposition shows that any hyperelliptic curve satisfying the assumptions of Theorem~\ref{theoremsplittingtheorem} is in fact of the form as given in Example~\ref{examplesofsutomorpohism}.

\begin{prop}\label{lemmatauactsbyplusminusoneony}
    Let $p \ge 3$ and let $\tau \in \aut(X)$. If the induced isomorphism  $\bar \tau \colon \mathbb P_k^1 \ra \mathbb P_k^1$ is given by $x \mapsto x+ a$ for some $a \not= 0$, then the action of $\tau^*$ on $y$ is given by $\tau^*(y) = y$ or $\tau^*(y) = -y$ and $f(x)$ is of the form $f(x) = q(x^p-a^{p-1} x)$ for some polynomial $q \in k[z]$ without repeated roots.
\end{prop}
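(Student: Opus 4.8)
The plan is to argue entirely within the function field $K(X) = k(x)[y]/(y^2 - f(x))$. Since the hyperelliptic involution is central, $\tau$ induces $\tau^*$ on $K(X)$ whose restriction to $k(x)$ is $\bar\tau^*\colon x \mapsto x+a$. First I would write $\tau^*(y) = u(x) + v(x)y$ with $u,v \in k(x)$ and feed this into the defining relation: applying $\tau^*$ to $y^2 = f(x)$ gives $u^2 + v^2 f(x) + 2uv\,y = f(x+a)$. Comparing the coefficients of $1$ and $y$ in the $k(x)$-basis $\{1,y\}$, and using $p \neq 2$, forces $uv = 0$; moreover $v = 0$ is impossible, since it would place the image of the isomorphism $\tau^*$ inside $k(x) \subsetneq K(X)$. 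Hence $\tau^*(y) = v(x)y$ with $v^2 f(x) = f(x+a)$.

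The second step pins down $v$. Writing $v = A/B$ in lowest terms gives $A^2 f(x) = B^2 f(x+a)$; as $\gcd(A,B) = 1$ and both $f(x)$ and $f(x+a)$ are squarefree (because $f$ has no repeated roots), the divisibilities $B^2 \mid f(x)$ and $A^2 \mid f(x+a)$ force $A$ and $B$ to be constant. Thus $v$ is a constant $\lambda$, and comparing leading coefficients in $\lambda^2 f(x) = f(x+a)$ (both sides monic of equal degree) yields $\lambda^2 = 1$. This proves $\tau^*(y) = \pm y$ and, simultaneously, $f(x+a) = f(x)$.

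It then remains to determine the polynomials fixed by the translation $\bar\tau\colon x \mapsto x+a$. This translation has order $p$ since $pa = 0$, and $\theta := x^p - a^{p-1}x$ is invariant because $(x+a)^p - a^{p-1}(x+a) = x^p - a^{p-1}x$. I would show $k[x]^{\langle\bar\tau\rangle} = k[\theta]$: by Artin's theorem the fixed field is $k(x)^{\langle\bar\tau\rangle} = k(\theta)$, a degree-$p$ subextension that contains $\theta$; and since $k[x]$ is module-finite, hence integral, over the integrally closed ring $k[\theta]$, every invariant polynomial lies in $k[x] \cap k(\theta) = k[\theta]$. Applying this to $f$, which is $\bar\tau$-invariant by the previous step, gives $f = q(\theta) = q(x^p - a^{p-1}x)$ for some $q \in k[z]$. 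That $q$ has no repeated roots follows from $f$ having none: since $\theta' = -a^{p-1} \neq 0$ and $f' = -a^{p-1}q'(\theta)$, a common root $\beta$ of $q$ and $q'$ would make every root of $\theta - \beta$ a common root of $f$ and $f'$, a contradiction.

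I expect the first two steps to be essentially routine once the squarefreeness of $f(x)$ and $f(x+a)$ is invoked; the conceptual heart of the argument — and the step producing the specific shape of $f$ — is the identification $k[x]^{\langle\bar\tau\rangle} = k[\theta]$, i.e.\ recognising $x^p - a^{p-1}x$ as the Artin–Schreier invariant of the order-$p$ additive translation. That is where I would be most careful.
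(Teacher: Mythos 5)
Your argument is correct and follows essentially the same route as the paper: write $\tau^*(y)=u+vy$, kill the cross term to get $u=0$ and $v=\pm 1$, and then identify the $\bar\tau$-invariant polynomials with $k[x^p-a^{p-1}x]$ via the Artin--Schreier extension $k(x)/k(z)$ together with integrality over $k[z]$. Your treatment is in fact slightly more complete in two small places: you justify that $v$ is constant via squarefreeness of $f$ rather than a terse degree comparison, and you actually verify that $q$ has no repeated roots, a point the paper's proof leaves implicit.
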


\begin{proof}
    We first show that $\tau^*(y) = \pm y$. There exist $g_1(x)$ and $g_2(x)\not= 0 $ in $k(x)$ such that
        \begin{equation*}
        \tau^*(y) = g_1(x) + g_2(x)y \in k(x,y).
        \end{equation*}
    Hence
        \begin{equation}\label{equationexpandingysquared}
        f(x+a) = \tau^*(y^2) = (\tau^*(y))^2 = g_1(x)^2 + 2g_1(x)g_2(x)y + g_2(x)^2f(x).
        \end{equation}
   This implies that $g_1(x) =0$ because otherwise
        \[
        y = \frac{f(x+a) - g_1(x)^2 - g_2(x)^2f(x)}{2g_2(x)g_1(x)}
        \]
   would belong to $k(x)$. By comparing the degrees in \eqref{equationexpandingysquared} we see that $g_2(x)$ is a constant, and then by comparing coefficients in the same equation we see that $g_2(x)^2 = 1$.
    Hence $\tau^*(y) = \pm y$, as claimed.

    We now show that $f(x)$ is of the form $q(x^p-a^{p-1}x)$. The extension $k(x)= k(z,x)$ of the rational function field $k(z)$ obtained by adjoining an element $x$ satisfying the equation $x^p-a^{p-1}x-z=0$ is a Galois extension with cyclic Galois group generated by the automorphism $x \mapsto x+a$. We derived above that $f(x) = f(x+a)$. Hence $f(x)$ belongs to $k(z)$. Furthermore, $x$ and hence $f(x)$ is integral over $k[z]$. Therefore, $f(x)\in k(z)$ belongs to $k[z]$, i.e., $f(x) = q(x^p-a^{p-1}x)$ for some $q \in k[z]$ without repeated roots, as was to be shown.
    \end{proof}

\begin{proof}[Proof (of Theorem~\ref{theoremsplittingtheorem})]
We suppose that the sequence \eqref{equationderhamcohomologyshortexactseqeunce} does split and that
\[s \colon \hone \ra \derhamhone\]
is a $k[G]$-linear splitting map.
    Then we have
        \begin{equation}\label{equationcommutativityoftauandsplittingmap}
        s(\tau^*(\alpha)) = \tau^*(s(\alpha)) \in \derhamhone
        \end{equation}
    and
        \begin{equation}p(s(\alpha)) = \alpha\end{equation}
    for all $\alpha \in \hone$.
    We will show that these equalities give rise to a contradiction when $\alpha$ is the residue class $ \left[ \frac{y}{x^g}\right]$ in $\hone$ (see Proposition~\ref{theorembasisofhone}).

We first show that $\left[\frac{y}{x^g}\right] \in \hone$ is fixed by $\tau^*$.
   To this end, we recall that $\cU=\{U_0, U_\infty\}$, $\cU'= \{U_a, U_\infty\}$ and $\cU''=\{U_0, U_a, U_\infty\}$ and consider the following obviously commutative diagram of isomorphisms where $\rho$ and $\rho'$ are defined as in \eqref{equationdefinitionofrho} and the equalities denote the identification~(\ref{equationcechhoneisomorphismfunctions}):
        \[\xymatrix{
        \hone \ar@{=}[r] \ar[d]^{\tau^*}& \check{H}^1(\cU,\cO_X)  &  \check{H}^1(\cU'',\cO_X)  \ar[l]_{\rho} \ar[d]^{\rho'}\\
        \hone \ar@{=}[r] & \check{H}^1(\cU,\cO_X)  & \check{H}^1(\cU',\cO_X) \ar[l]_{\tau^*}.
        } \]
    By (the proof of) Proposition~\ref{propbasisoftriplecoverderham}, the triple $\left( \frac{r_g(x)y}{x^g(x-a)^g}, \frac{y}{x^g}, \frac{t_g(x)y}{x^g(x-a)^g} \right)$ defines a well-defined element of $\check{H}^1(\cU'',\cO_X)$. Hence we have
          \begin{align*}
    \rho^{-1} \left(\left[\frac{y}{x^g}\right]\right) &=
        \left[ \left( \frac{r_g(x)y}{x^g(x-a)^g}, \frac{y}{x^g}, \frac{t_g(x)y}{x^g(x-a)^g} \right)\right] \\
        &= \left[ \left( \frac{((x-a)^g-x^g)y}{x^g(x-a)^g}, \frac{y}{x^g}, \frac{y}{(x-a)^g} \right) \right] \textrm{ in } \check{H}^1(\cU'',\cO_X).
        \end{align*}
    We therefore obtain
        \begin{align*}
        \tau^* \left( \left[\frac{y}{x^g}\right]\right)  & = \tau^* \left( \rho' \left(\rho^{-1}\left(\left[\frac{y}{x^g}\right]\right)\right)\right) \\
        & = \tau^*\left( \rho' \left( \left[ \left( \frac{((x-a)^g-x^g)y}{x^g(x-a)^g}, \frac{y}{x^g}, \frac{y}{(x-a)^g} \right) \right] \right) \right) \\
        & = \tau^*\left( \left[ \frac{y}{(x-a)^g} \right] \right) = \left[ \frac{y}{x^g} \right],
        \end{align*}
    as claimed.

    By Theorem~\ref{theorembasisofderham}, the elements $\lambda_i$, $i=0, \ldots, g-1$, defined in \eqref{equationhzerobasiselementofderhampnot2} together with the elements $\gamma_i$, $i=1, \ldots, g$, defined in \eqref{equationhonebasiselementofderhampnot2} form a basis of $\derhamhone$.
        Since the canonical projection $p: \derhamhone \ra \hone$ is $k[G]$-linear and maps $\gamma_g$ to the residue class $\left[ \frac{y}{x^g} \right]$, it follows that
        \begin{equation}\label{equation defining lambd_i}
        \tau^*(\gamma_g) = \gamma_g + \sum_{i=0}^{g-1} c_i\lambda_i
        \end{equation}
    for some $c_0, \ldots, c_{g-1} \in k$.
    On the other hand, we have
        \[
        s\left( \left[ \frac{y}{x^g} \right] \right)  = \gamma_g + \sum_{i=0}^{g-1}d_i \lambda_i
        \]
    for some $d_0, \ldots, d_{g-1} \in k$.
    Now the action of $\tau^*$ on $\lambda_i$ for $0 \leq i \leq g-1$ is easily seen to be given by
        \begin{equation}\label{lemma action on lambda_i}
        \begin{split}
        \tau^*(\lambda_i) & = \tau^*\left( \left[ \left( \frac{x^i}{y}dx, \frac{x^i}{y}dx, 0\right) \right] \right) \\
        & = \left[ \left( \frac{(x+a)^i}{y}dx, \frac{(x+a)^i}{y}dx, 0 \right) \right] = \sum_{k=0}^i \binom{i}{k} a^{i-k} \lambda_k.
        \end{split}
        \end{equation}
    Plugging the equations obtained so far into equation~\eqref{equationcommutativityoftauandsplittingmap} we obtain
        \begin{align*}
        \gamma_g + \sum_{i=0}^{g-1} d_i\lambda_i & = s \left( \left[ \frac{y}{x^g} \right] \right)
        = s \left( \tau^*\left( \left[ \frac{y}{x^g} \right] \right) \right) \\
        & = \tau^* \left(s \left( \left[ \frac{y}{x^g} \right] \right) \right) = \tau^* \left( \gamma_g + \sum_{i=0}^{g-1} d_i \lambda_i \right) \\
        & = \left( \gamma_g + \sum_{i=0}^{g-1} c_i \lambda_i \right) + \sum_{i=0}^{g-1} d_i \left( \sum_{k = 0}^i \binom{i}{k} a^{i-k} \lambda_k \right).
        \end{align*}
    By comparing coefficients of the basis element $\lambda_{g-1}$, we see that $c_{g-1} = 0$.
   On the other hand, we will below derive the equation $c_{g-1} = a/4$ from the defining equation~\eqref{equation defining lambd_i}. Since we assumed that $a \neq 0$, this gives us the desired contradiction.

   The left-hand side of equation~\eqref{equation defining lambd_i} is $\tau^*(\gamma_g)$. To compute $\tau^*(\gamma_g)$ we consider the following commutative diagram of isomorphisms
        where $\rho$ is the canonical projection \eqref{equationdefinitionofrho}, $\rho'$ is given by
            $
             (\omega_0, \omega_a, \omega_\infty, f_{0 a}, f_{0 \infty}, f_{a \infty}) \mapsto (\omega_a, \omega_\infty, f_{a \infty})
            $
        and the equalities denote the identification given by~\eqref{equationderhamspace} and~\eqref{equationderhamquotient}:
        \begin{equation}\label{equationderhamcommutativediagram}
        \begin{split}
        \xymatrix{
        \derhamhone \ar@{=}[r] \ar[d]^{\tau^*}& \check{H}^1_\mathrm{dR}(\cU)   & \check{H}^1_\mathrm{dR}(\cU'') \ar[l]_{\rho} \ar[d]^{\rho'} \\
        \derhamhone \ar@{=}[r] & \check{H}^1_\mathrm{dR}(\cU)  & \check{H}^1_\mathrm{dR}(\cU') \ar[l]_{\tau^*}.
        }
        \end{split}
        \end{equation}
    Then, by Proposition \ref{propbasisoftriplecoverderham}, we have:
        \begin{equation}\label{equationactionoftauongamma}
        \begin{split}
        \tau^*(\gamma_g) & = \tau^*(\rho'(\rho^{-1}(\gamma_g))) \\
        & = \tau^*\left( \left[ \omega_{a g}, \frac{- \phi_g(x)}{2x^{g+1}y} dx, \frac{y}{(x-a)^g} \right) \right] \\
        & = \left[ \left( \tau^*(\omega_{a g}) , \frac{-\phi_g(x+a)}{2(x+a)^{g+1}y}dx, \frac{y}{x^g} \right) \right].
        \end{split}
        \end{equation}
    On the other hand, the right hand side of equation \eqref{equation defining lambd_i} is equal to
        \begin{equation}\label{equationimageofgammaunders}
        \left[ \left( \frac{\psi_g(x)}{2x^{g+1}y}dx, \frac{-\phi_g(x)}{2x^{g+1}y}dx, \frac{y}{x^g} \right) \right] \\ + \sum_{i=0}^{g-1} c_i \left[ \left( \frac{x^i}{y}dx, \frac{x^i}{y}dx, 0 \right) \right].
        \end{equation}
    Note that the third entry in both \eqref{equationactionoftauongamma} and \eqref{equationimageofgammaunders} is $\frac{y}{x^g}$.
    Now, if the third entry of a triple $(df_0, df_\infty, f_0|_{U_0 \cap U_\infty}- f_\infty|_{U_0\cap U_\infty})$ in the subspace \eqref{equationderhamquotient} of the space~\eqref{equationderhamspace} vanishes, then $f_0$ and $f_\infty$ glue to a global and hence constant function and the whole triple vanishes. Hence, the triples in  \eqref{equationactionoftauongamma} and \eqref{equationimageofgammaunders} are  equal already before taking residue classes. By comparing the second entries of \eqref{equationactionoftauongamma} and \eqref{equationimageofgammaunders} we therefore obtain the equation
        \[
        - \frac{\phi_g(x+a)}{2(x+a)^{g+1}y} dx = -\frac{\phi_g(x)}{2x^{g+1}y}dx + \sum_{i = 0}^{g-1} c_i \frac{x^i}{y}dx \quad \textrm{ in } \quad \Omega_{K(X)}.
        \]
    Since $dx$ is a basis of $\Omega_{K(X)}$ considered as a $K(X)$-vector space, the equation above is equivalent to the equation
        \[
        \frac{\phi_g(x+a)}{2(x+a)^{g+1}} = \frac{\phi_g(x)}{2x^{g+1}} - \sum_{i=0}^{g-1} c_i x^i
        \]
(in the rational function field $k(x)$) which in turn is equivalent to the equation
        \[
        \phi_g(x+a)x^{g+1} = \phi_g(x)(x+a)^{g+1} - 2(x+a)^{g+1}x^{g+1}\sum_{i=0}^{g-1}c_i x^i \quad \textrm{ in } \quad k[x].
        \]
    Now, the assumption that the degree of $f(x)$ is odd  means that the degree of~$f(x)$ is precisely $2g+1$. By definition, the terms of highest degree in $\phi_g(x)$ are the same as the terms of highest degree in
        \[
        s_g(x) = xf'(x) - 2gf(x) = x^{2g+1} + 0\cdot x^{2g} + \ldots.
        \]
    We therefore have
        \begin{multline*}
        \left( (x+a)^{2g+1} + 0 \cdot (x+a)^{2g} + \ldots \right) x^{g+1}  \\ = (x^{2g+1} + 0 \cdot x^{2g} + \ldots )(x+a)^{g+1} - 2(x+a)^{g+1}x^{g+1}(c_{g-1}x^{g-1} + \ldots ).
        \end{multline*}
    Hence, by comparing the coefficients of $x^{3g+1}$, we obtain
        \[
        (2g+1)a = (g+1)a - 2c_{g-1}.
        \]
    and hence
    \[c_{g-1} = \frac{((g+1) - (2g+1))a}{2} = - \frac{g}{2} a.\]
    Finally, we have $2g + 1= \deg(f(x)) \equiv 0 \mod \; p$ by Proposition~\ref{lemmatauactsbyplusminusoneony} and hence
        \[
        c_{g-1} = \frac{a}{4},
        \]
    as claimed above.
    This concludes the proof of Theorem \ref{theoremsplittingtheorem}.
   \end{proof}

\begin{rmk}\label{mistake2}
{\em While the method of calculating $\tau^*\left(\left[\frac{y}{x^g}\right]\right)$ and $\tau^*(\gamma_g)$ in the proof above is the same as in \cite{canonicalrepresentation}, the actual computations do not generalise those in \cite{canonicalrepresentation}, not only due to Remark~\ref{mistake} but also because of further mistakes in \cite{canonicalrepresentation}. Finally, the argument in the proof above for obtaining the desired contradiction is different from the one in \cite{canonicalrepresentation} the very end of which has unfortunately not been carried out anyway.}
\end{rmk}

We conclude with an example which demonstrates that the requirement in Theorem \ref{theoremsplittingtheorem} of $f(x)$ to be of odd degree is a necessary condition.

\begin{example}\label{ExampleBranchPoint}
{\em
    Let $p = 3$ and $X$ be the hyperelliptic curve of genus $2$ defined by the equation
        \[
        y^2 = f(x) =  x^6 + x^4 + x^2 + 2.
        \]

     As in Theorem~\ref{theoremsplittingtheorem} let $\tau$ denote the automorphism of $X$ given by $(x,y) \mapsto (x+1,y)$ and let $G:= \langle \tau \rangle$.

    By Theorem \ref{theorembasisofderham}, a basis of $\check{H}^1_\mathrm{dR}(\cU)$ is given by
        \[
        \lambda_0  =\left[ \left( \frac{1}{y}dx, \frac{1}{y}dx, 0 \right) \right], \quad
        \lambda_1  =\left[ \left(\frac{x}{y}dx, \frac{x}{y}dx, 0\right) \right], \]
        \[\gamma_1  =\left[\left( \frac{1}{x^2y}dx, \frac{x^4+2x^2}{y}dx, \frac{y}{x} \right) \right], \quad
        \gamma_2  =\left[\left( \frac{x^2 +1}{2x^3y}dx, \frac{2x^3}{y}dx, \frac{y}{x^2}\right) \right].
        \]
    By Proposition~\ref{theorembasisofhone}, the residue classes $\bar{\gamma}_1:= \left[\frac{y}{x}\right]$ and $\bar \gamma_2:= \left[\frac{y}{x^2}\right]$ form a basis of $\hone$.
    We define a map
        \[
        s \colon \hone \ra \derhamhone
        \]
        of vector spaces over $k$ by
        \[
        \bar \gamma_1 \mapsto \gamma_1 \quad \text{and} \quad \bar \gamma_2 \mapsto \gamma_2 + \lambda_1.
        \]
    Clearly $p \circ s$ is the identity map on $\hone$, and hence, if $s$ is $k[G]$-linear, the sequence in Proposition \ref{propshortexactsequence} does split as a sequence of $k[G]$-modules.

    We now show that $s$ is $k[G]$-linear.
    By Proposition \ref{propbasisoftriplecoverderham}, the pre-images of~$\gamma_1$ and $\gamma_2$ under $\rho$ in $\check{H}^1_\mathrm{dR}(\cU'')$ are the residue classes of
        \[
        \nu_1 = \left(\frac{1}{x^2y}dx, \frac{x^4+2x^3+2x^2}{2(x-1)^3y}dx, \frac{x^4+2x^2}{y}dx, \frac{y}{x(x-1)^2}, \frac{y}{x}, \frac{(x+1)y}{(x-1)^2} \right)
        \]
    and
        \[
        \nu_2 = \left( \frac{x^2+1}{2x^3y}dx, \frac{x^3+x^2+x+1}{2(x-1)^3y}dx, \frac{2x^3}{y}dx, \frac{(x+1)y}{x^2(x-1)^2}, \frac{y}{x^2}, \frac{y}{(x-1)^2} \right),
        \]
    respectively. Using a computation similar to \eqref{equationactionoftauongamma}, it is easy to verify that
        \begin{align*}
        \tau^*(\gamma_1) &= \tau^* (\rho'(\nu_1)) \\
        &=\left[\left(\frac{x^4+2x^2 +2x+2}{2x^3y}dx, \frac{x^4+x^3+2x^2+2x}{y}dx, \frac{(x+2)y}{x^2}\right)\right]\\
        &= \gamma_1 + 2\gamma_2 + 2\lambda_1
        \end{align*}
    and that
        \begin{align*}
        \tau^*(\gamma_2) &= \tau^*(\rho'(\nu_2)) \\
        &=\left[\left(\frac{x^3+x^2+1}{2x^3y}dx, \frac{2x^3+2}{y}dx, \frac{y}{x^2}\right)\right]\\
        &= \gamma_2 + 2 \lambda_0.
        \end{align*}
    Furthermore, we have seen in \eqref{lemma action on lambda_i} that
        \[
        \tau^*(\lambda_0) = \lambda_0 \quad \text{and} \quad \tau^*(\lambda_1) = \lambda_1 + \lambda_0.
        \]
    We finally conclude that
        \[
        s(\tau^*(\bar\gamma_1)) = s(\bar\gamma_1 + 2\bar \gamma_2) = \gamma_1 + 2\gamma_2 + 2\lambda_1 = \tau^*(\gamma_1) = \tau^*(s(\bar\gamma_1))
        \]
    and
        \[
        s(\tau^*(\bar\gamma_2)) = s(\bar\gamma_2) = \gamma_2 + \lambda_1 = \tau^*(\gamma_2 + \lambda_1)  = \tau^*(s(\bar\gamma_2)).
        \]
    Hence $s$ is $k[G]$-linear, and the Hodge-de-Rham short exact sequence \eqref{equationderhamcohomologyshortexactseqeunce} splits.}
    \end{example}

\begin{rmk}\label{RemarkBranchPoint}
{\em
    The curve $X$ defined in the previous example is isomorphic to the modular curve $X_0(22)$.

    To see this, we first note that, by \cite[Table 2]{automorphismshyperellipticmodular}, the modular curve $X_0(22)$ is the hyperelliptic curve of genus $2$ defined by
        \[
        y^2 = f(x) = x^6 + 2x^4 +x^3 +2x^2 +1.
        \]
    Now,  $x \mapsto x-1$, $y \mapsto y$ defines an isomorphism between $X_0(22)$ and the curve defined by $y^2 = x^6 + 2x^4 + 2x^2 + 2$.
    We finally apply the isomorphism described in the following general procedure.

    If $g(x) = a_sx^s + \ldots + a_0$ with $a_0 \neq 0 \neq a_s$, we define $g^*(x) := a_0^{-1}x^sg\left( \frac{1}{x}\right)$.
    It is stated after Lemma 2.6 in \cite{automorphismshyperellipticmodular} that, if $y^2 = g(x)$ defines a hyperelliptic curve and $s$ is even, then the curves defined by $y^2=g(x)$ and $y^2 = g^*(x)$ are isomorphic.}
  \end{rmk}

\end{document}